\def\bb#1\eb{\textcolor{blue}
{#1}} %
\def\br#1\er{\textcolor{red}
{#1}} %
\def\bv#1\ev{\textcolor{green}
{#1}} %
\def\bc#1\ec{\textcolor{cyan}
{#1}} %
\def\bal#1\eal{\begin{align}#1\end{align}}                      %
\def\baln#1\ealn{\begin{align*}#1\end{align*}}          %di inizio e fine di
\def\bml#1\eml{\begin{multline}#1\end{multline}}        %
\def\bmln#1\emln{\begin{multline*}#1\end{multline*}}  %
\def\bga#1\ega{\begin{gather}#1\end{gather}}
\def\bgan#1\egan{\begin{gather*}#1\end{gather*}}
\newtheorem{lem}{Lemma}
\newtheorem{prop}{Proposition}
\newtheorem{thm}{Theorem}
\newtheorem{defn}{Definition}
\theoremstyle{remark}
\newtheorem{remark}{Remark}
\newcommand{\s}{\sharp}
\DeclareMathOperator{\dive}{div}
\newcommand{\codim}{{\rm codim}}
\newcommand{\dimo}{\noindent{\em Proof.}$\ $}
\renewcommand{\>}{\rangle}
\newcommand{\RR}{\mathbb R}
\newcommand{\NN}{\mathbb N}
\def\R{{\mathbb R}}
\def\N{{\mathbb N}}
\def\ZZ{{\mathbb Z}}
\newcommand{\X}{\mathbb{X}^{s}_{T}}
\newcommand{\h}{\mathbb{H}^{s}_{T}}
\newcommand{\J}{\mathcal{J}}
\newcommand{\T}{\mathrm{Tr}}
\def\div{\mathop{\rm div}}
\renewcommand{\leq}{\leqslant}
\renewcommand{\geq}{\geqslant}
\newcommand{\Z}{\ensuremath{\mathbb{Z}}\xspace}     %simb dei Reali
\title[Weak periodic solutions]{Periodic solutions for a fractional asymptotically linear problem}
\author[V. Ambrosio]{Vincenzo Ambrosio}
\address{Vincenzo Ambrosio\hfill\break\indent
Dipartimento di Scienze Pure e Applicate (DiSPeA) \hfill\break\indent
Universit\`a degli Studi di Urbino `Carlo Bo' \hfill\break\indent
Piazza della Repubblica, 13\hfill\break\indent
61029 Urbino (Pesaro e Urbino) \hfill\break\indent
Italy}
\email{vincenzo.ambrosio@uniurb.it}
\author[G. Molica Bisci]{Giovanni Molica Bisci}
\address{Giovanni Molica Bisci\hfill\break\indent
Dipartimento PAU\hfill\break\indent
          Universit\`a `Mediterranea' di Reggio Calabria\hfill\break\indent
          Salita Melissari, Feo di
Vito, 89100 Reggio Calabria\hfill\break\indent
           Italy}
\email{gmolica@unirc.it}
\keywords{Fractional Laplacian; Variational methods; Periodic solutions; Asymptotically linear problem; Nonresonant problems; Pseudo-genus.}
\date{}
\begin{document}

\begin{abstract}
We study the existence and multiplicity of periodic weak solutions for a non-local equation involving an odd subcritical nonlinearity which is asymptotically linear at infinity. We investigate such problem by applying the the pseudo-index theory developed by Bartolo, Benci and Fortunato \cite{bbf} after transforming the problem to a degenerate elliptic problem in a half-cylinder with a Neumann boundary condition, via a Caffarelli-Silvestre type extension in periodic setting. The periodic nonlocal case, considered here, presents, respect to the cases studied in literature, some new additional difficulties and a
careful analysis of the fractional spaces involved is necessary.
\end{abstract}

\maketitle

\section{Introduction}
We consider here the non-local counterpart in periodic setting of the semilinear problem
\begin{equation}\label{DBP}
\left\{
\begin{array}{ll}
-\Delta u=g(x, u) &\mbox{ in } \Omega  \\
u=0    &\mbox{ on } \partial \Omega
\end{array},
\right.
\end{equation}
where $\Omega \subset \R^{N}$ is a smooth bounded domain and $g: \Omega \times \R \rightarrow \R$ is a given function asymptotically linear and possibly odd.\par
We notice that problem (\ref{DBP}) has been widely studied in the past years by many authors by using topological and variational methods (see, among others, \cite{az, bbf, r} and references therein).\par
In recent years, great attention has been devoted to the study of elliptic problems involving non-local operators which arise in a quite natural way in several areas of research; for more details and applications we refer to \cite{dpv} and the recent book \cite{MRS}.\par
However, although in literature there are many papers dealing with non-local fractional Laplacian equations with superlinear and sublinear growth \cite{A2, A3, A4, A5, AP0, AP, sv1, sv2}, only few papers consider asymptotically linear problems in fractional setting, see, for instance, \cite{bmb1, bmb2,bmr, fsv,MRSForum}.\par
\indent The aim of the present paper is to give a further result in this direction, considering a non-local problem with periodic boundary conditions.
 %Now, we state the assumptions on the nonlinearity $f$ in (\ref{P}).

More precisely, we are interested in the existence and multiplicity of the following problem
\begin{equation}\label{P}
\left\{
\begin{array}{ll}
(-\Delta+m^{2})^{s}u=\lambda_{\infty} u+ f(x,u) &\mbox{ in } (0,T)^{N}   \\
u(x+Te_{i})=u(x)    &\,\forall x \in \R^{N},\, i\in \ZZ[1,N]
\end{array}
\right.
\end{equation}
where $s\in (0,1)$, $N \geq 2$, $m>0$, $\lambda_{\infty}\in \R$ , $\{e_{i}\}_{i=1}^{N}$ is the canonical basis in $\R^{N}$, $f:\R^N\times \R\rightarrow \R$ is a Caratheodory function satisfying suitable assumptions, and $\ZZ[1,N]:=\{1,...,N\}$.\par
%Here the nonlocal operator $(-\Delta+m^{2})^{s}$ is defined as follows:
%let $u\in \mathcal{C}^{\infty}_{T}(\R^{N})$, that is $u$ is infinitely differentiable in $\R^{N}$ and $T$-periodic in each variable. Then $u\in \mathcal{C}^{\infty}_{T}(\R^{N})$ can be expressed via Fourier series expansion:
%$$
%u(x)=\sum_{k\in \Z^{N}} c_{k} \frac{e^{\imath \omega k\cdot x}}{{\sqrt{T^{N}}}} \quad (x\in \R^{N})
%$$
%where
%$$
% \omega:=\frac{2\pi}{T}\mbox{ and } \; c_{k}:=\frac{1}{\sqrt{T^{N}}} \int_{(0,T)^{N}} u(x)e^{- \imath \omega k \cdot x}dx \quad (k\in \Z^{N})
%$$
%are the Fourier coefficients of the smooth and $T$-periodic function $u$.\par
%Hence, the nonlocal operator $(-\Delta+m^{2})^{s}$ is given by
Moreover, the nonlocal operator $(-\Delta+m^{2})^{s}$ appearing in (\ref{P}) is defined by setting
\begin{equation*}\label{nfrls}
(-\Delta+m^{2})^{s} \,u(x):=\sum_{k\in \Z^{N}} \beta_{k} (\omega^{2}|k|^{2}+m^{2})^{s} \, \frac{e^{\imath \omega k\cdot x}}{{\sqrt{T^{N}}}},
\end{equation*}
for any $\displaystyle{u=\sum_{k\in \Z^{N}} \beta_{k} \frac{e^{\imath \omega k\cdot x}}{{\sqrt{T^{N}}}} \in \mathcal{C}^{\infty}_{T}(\R^{N})}$, where
$$
 \omega:=\frac{2\pi}{T}\mbox{ and } \; \beta_{k}:=\frac{1}{\sqrt{T^{N}}} \int_{(0,T)^{N}} u(x)e^{- \imath \omega k \cdot x}dx \quad (k\in \Z^{N})
$$
are the Fourier coefficients of the smooth and $T$-periodic function $u$.\par
\indent
This operator can be extended by density for every function that lies in the Hilbert space
$$
\mathbb{H}^{s}_{T}:=\left\{u=\sum_{k\in \Z^{N}} \beta_{k} \frac{e^{\imath \omega k\cdot x}}{{\sqrt{T^{N}}}}\in L^{2}(0,T)^{N}: \sum_{k\in \Z^{N}} (\omega^{2}|k|^{2}+m^{2})^{s} \, |\beta_{k}|^{2}<+\infty \right\}
$$
endowed by the norm
$$
|u|_{\mathbb{H}^{s}_{T}}:=\left(\sum_{k\in \Z^{N}} (\omega^{2}|k|^{2}+m^{2})^{s} |\beta_{k}|^{2}\right)^{1/2}.
$$

We also recall the embedding
properties of~$\mathbb{H}^{s}_{T}$ into the usual Lebesgue spaces; see Theorems \ref{tracethm} and \ref{compacttracethm} in
Subsection \ref{Sub1}. The embedding $j:\mathbb{H}^{s}_{T}\hookrightarrow
L^{\nu}(0,T)^{N}$ is continuous for any $\nu\in [1,2^{\sharp}_s]$, while it is
compact whenever $\nu\in [1,2^{\sharp}_s)$, where $2^{\sharp}_s:=2N/(N-2s)$ denotes the \textit{fractional critical Sobolev exponent}.

\indent
From a physical point of view, the meaning of the fractional operator $(-\Delta+m^{2})^{s}$ is manifest in the case $s=\frac{1}{2}$; in such case, $(-\Delta+m^{2})^{{1}/{2}}-m$ is the so-called free Hamiltonian which plays a fundamental role in relativistic quantum mechanic; see \cite{LL}.\par
Furthermore, $(-\Delta+m^{2})^{s}-m^{2s}$ is also related to the L\'{e}vy Processes theory; the operator in question is an infinitesimal generator of the relativistic $2s$-stable process  $\{X^{m}_{t}\}_{t \geq 0}$ with characteristic function given by
\begin{equation*}\label{carfun}
\mathbb{E}(e^{i \xi \cdot X^{m}_{t}}) := e^{-t [(m^{2}+|\xi|^{2})^{s}-m^{2s}]}  \quad (\xi \in \R^{N});
\end{equation*}
we refer to \cite{CMS} and \cite{ryznar}.

\smallskip
From now on we will suppose that $f:\R^{N+1}\rightarrow \R$ is a Caratheodory function satisfying the following assumptions:
\smallskip
\begin{compactenum}[($f_1$)]
\item $f(x,t)$ \textit{ is } $T$-\textit{periodic in} $x \in \R^{N}$, \textit{that is} $f(x+Te_{i},t)=f(x,t)$ \textit{for any} $x\in \R^{N}$ ($T\in \R$), \textit{for every} $i\in\ZZ[1,N]$ \textit{and}
$$
\sup_{|t|\leq a}|f(\cdot,t)|\in L^{\infty}(0, T)^{N} \mbox{ for any } a>0;
$$
\item \textit{there exist}
\begin{equation}\label{conmu2}
\lim_{|t|\rightarrow \infty} \frac{f(x, t)}{t}=0
\end{equation}
and
\begin{equation}
\lim_{t\rightarrow 0} \frac{f(x, t)}{t}=\lambda_{0}\in \R
\end{equation}
\textit{ uniformly with respect to a.e.} $x\in \R^{N}$.
\end{compactenum}
\smallskip

\indent
Let us denote respectively by $\sigma((-\Delta+m^{2})^{s})$ and by
$$0<\lambda_1<\lambda_2\leq\ldots\leq \lambda_k\leq\ldots$$ the spectrum and the non--decreasing, diverging sequence of the following problem
\begin{equation*}\label{Pauto}
\left\{
\begin{array}{ll}
(-\Delta+m^{2})^{s}u=\lambda u &\mbox{ in } (0,T)^{N}   \\
u(x+Te_{i})=u(x)    &\,\forall x \in \R^{N},\, i\in \ZZ[1,N]
\end{array}
\right.
\end{equation*}
\noindent repeated according to their multiplicity; by using the classical spectral theory \cite{RN, Y}, in Subsection \ref{spettro} we recall some features, which are very closed to the well known ones of $-\Delta$, about the spectrum of the operator $(-\Delta+m^{2})^{s}$.\par
\smallskip
With the above notation, our first result can be stated as follows
\begin{thm}\label{thm1}
Let $m>0,$ $s\in (0, 1)$ and $N\geq 2$. Suppose that $f:\R^{N+1} \rightarrow \R$ is a continuous function satisfying $(f_1)$ and $(f_2)$. Then, problem \eqref{P} has at least a weak solution in $\mathbb{H}^{s}_{T}$, provided that $\lambda_{\infty}\notin \sigma((-\Delta+m^{2})^{s})$.
\end{thm}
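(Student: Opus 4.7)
The plan is to apply Rabinowitz's classical saddle point theorem to the natural energy functional
\begin{equation*}
J(u) := \frac{1}{2}|u|_{\mathbb{H}^{s}_{T}}^{2} - \frac{\lambda_{\infty}}{2}\int_{(0,T)^{N}} u^{2}\,dx - \int_{(0,T)^{N}} F(x,u)\,dx,
\end{equation*}
where $F(x,t):=\int_{0}^{t}f(x,\tau)\,d\tau$. The hypotheses $(f_{1})$--$(f_{2})$ and the continuous embedding $\mathbb{H}^{s}_{T}\hookrightarrow L^{2}(0,T)^{N}$ guarantee that $J\in C^{1}(\mathbb{H}^{s}_{T};\R)$ and that its critical points are precisely the weak solutions of \eqref{P}. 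A key quantitative tool is the standard consequence of $(f_{2})$: for every $\varepsilon>0$ there exists $C_{\varepsilon}>0$ such that $|f(x,t)|\leq\varepsilon|t|+C_{\varepsilon}$, whence $|F(x,t)|\leq(\varepsilon/2)\,t^{2}+C_{\varepsilon}|t|$ for a.e.~$x$ and every $t\in\R$.

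Since $\lambda_{\infty}\notin\sigma((-\Delta+m^{2})^{s})$, there exists an index $k\geq 0$ with $\lambda_{k}<\lambda_{\infty}<\lambda_{k+1}$ (with the convention $\lambda_{0}=-\infty$, so $V=\{0\}$ if $\lambda_{\infty}<\lambda_{1}$). I would split $\mathbb{H}^{s}_{T}=V\oplus W$, where $V$ is the finite-dimensional span of the eigenfunctions associated to $\lambda_{1},\ldots,\lambda_{k}$ and $W=V^{\perp}$. The Poincar\'e-type estimates $|u|_{\mathbb{H}^{s}_{T}}^{2}\leq\lambda_{k}\|u\|_{L^{2}}^{2}$ on $V$ and $|u|_{\mathbb{H}^{s}_{T}}^{2}\geq\lambda_{k+1}\|u\|_{L^{2}}^{2}$ on $W$, combined with the bound on $F$ with $\varepsilon$ chosen small, deliver the required saddle geometry: $\inf_{W}J>-\infty$, while $J(u)\to-\infty$ as $|u|_{\mathbb{H}^{s}_{T}}\to\infty$ on the finite-dimensional space $V$ (where the negative quadratic part dominates, by equivalence of norms). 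Thus one can fix $R>0$ so large that $\sup_{V\cap\partial B_{R}}J<\inf_{W}J$.

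The heart of the argument is the Palais--Smale condition. Let $\{u_{n}\}$ be a $(\mathrm{PS})_{c}$-sequence. To establish boundedness I argue by contradiction: suppose $|u_{n}|_{\mathbb{H}^{s}_{T}}\to\infty$ and set $v_{n}:=u_{n}/|u_{n}|_{\mathbb{H}^{s}_{T}}$. Up to a subsequence, $v_{n}\rightharpoonup v$ in $\mathbb{H}^{s}_{T}$ and $v_{n}\to v$ strongly in $L^{2}$ by the compact embedding. The linear-type bound on $f$ gives $\|f(\cdot,u_{n})/|u_{n}|_{\mathbb{H}^{s}_{T}}\|_{L^{2}}\to 0$; dividing $\langle J'(u_{n}),\phi\rangle\to 0$ by $|u_{n}|_{\mathbb{H}^{s}_{T}}$ and passing to the weak limit yields that $v$ solves $(-\Delta+m^{2})^{s}v=\lambda_{\infty}v$. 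By nonresonance, $v=0$, and therefore $v_{n}\to 0$ in $L^{2}$. Dividing $\langle J'(u_{n}),u_{n}\rangle\to 0$ by $|u_{n}|_{\mathbb{H}^{s}_{T}}^{2}$ then gives $1-\lambda_{\infty}\|v_{n}\|_{L^{2}}^{2}+o(1)=0$, i.e.~$1=0$, a contradiction. With boundedness in hand, a weak $\mathbb{H}^{s}_{T}$-limit $u$ combined with strong $L^{2}$-convergence and the $V\oplus W$ splitting promote weak to strong convergence in $\mathbb{H}^{s}_{T}$.

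With saddle geometry and $(\mathrm{PS})$ in place, Rabinowitz's theorem produces a critical value of $J$, yielding a weak solution of \eqref{P}. I expect the main technical obstacle to be the Palais--Smale step: the linear-at-infinity contribution $\lambda_{\infty}u$ combined with the sublinear perturbation $f$ makes blow-up sequences genuinely possible in the absence of nonresonance, so the hypothesis $\lambda_{\infty}\notin\sigma((-\Delta+m^{2})^{s})$ must be used in two essential ways---to force $v=0$ in the normalized limit and to upgrade weak to strong convergence via the spectral decomposition---each time leaning on the compactness of the embedding $\mathbb{H}^{s}_{T}\hookrightarrow L^{2}(0,T)^{N}$ recalled in Subsection~\ref{Sub1}.
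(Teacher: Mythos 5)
Your proposal is correct and follows the same overall strategy as the paper---the Saddle Point Theorem applied to a spectral decomposition of the ambient space, with the Palais--Smale condition verified by a normalization/blow-up argument that exploits nonresonance---but there are two genuine departures in execution. First, you work directly with the intrinsic functional $J$ on $\mathbb{H}^{s}_{T}$ built from the Fourier norm, whereas the paper passes to the half-cylinder $\mathcal{S}_{T}$ and studies the extended functional $\mathcal{J}$ on $\mathbb{X}^{s}_{T}$ via the trace/extension machinery of Theorems~\ref{tracethm}--\ref{compacttracethm}. The two settings are equivalent: by property~$(iii)$ of the extension, $\|v\|_{\mathbb{X}^{s}_{T}}^{2} = \kappa_{s}\,|u|_{\mathbb{H}^{s}_{T}}^{2}$, so critical points of $J$ correspond under $\textup{Tr}$ to critical points of $\mathcal{J}$, which is the paper's notion of weak solution to \eqref{P}. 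Your intrinsic route is shorter here because the Fourier description of $\mathbb{H}^{s}_{T}$ makes the operator and the spectrum transparent; the extension route carries no extra cost in the paper because that machinery is needed anyway for the eigenvalue problem in $\mathcal{S}_{T}$ and for Theorem~\ref{thm2}. Second, your boundedness argument is a clean variant of the paper's: you identify the normalized weak limit $v$ as a solution of $(-\Delta+m^{2})^{s}v=\lambda_{\infty}v$, conclude $v=0$ from nonresonance, and derive the contradiction from the scalar identity $1-\lambda_{\infty}\|v_n\|_{L^{2}}^{2}+o(1)=0$ obtained by pairing $J'(u_n)$ with $u_n$ and dividing by $|u_n|^{2}_{\mathbb{H}^{s}_{T}}$; the paper instead first upgrades the normalized sequence $w_j$ to strong $\mathbb{X}^{s}_{T}$-convergence, so the limit $w$ is nonzero, and reaches the contradiction by exhibiting $\textup{Tr}(w)$ as a nontrivial $\lambda_{\infty}$-eigenfunction. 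Both are valid; yours avoids proving strong convergence at that stage. One minor remark: in the final step you invoke the $V\oplus W$ splitting to promote weak to strong convergence, but this is not needed---once $\{u_n\}$ is bounded, testing $J'(u_n)\to 0$ against $u_n-u$ together with the compact embedding into $L^{2}(0,T)^{N}$ directly gives $|u_n|_{\mathbb{H}^{s}_{T}}\to|u|_{\mathbb{H}^{s}_{T}}$, hence norm convergence.
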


The main difficulty in our approach is essentially based on the fact that problem \eqref{P} is of non-nonlocal type. In addition, it doesn't seem to satisfy boundary conditions which would allow a direct application of variational methods without a certain extension procedure (see Section \ref{Sec2}). Following the work of Caffarelli and Silvestre \cite{C1}, the fractional Laplacian operator in the whole space $\RR^n$ can be defined
 as a Dirichlet to a Neumann map:
$$
(-\Delta)^{s}u(x):=-\kappa_s\lim_{y\rightarrow 0^+}y^{1-2s}\frac{\partial w}{\partial y}(x,y),
$$
where $\kappa_s$ is a suitable constant and $w$ is the $s$-harmonic extension of a smooth function $u$. In other words, $w$ is the function defined on the upper half-space $\RR^{n+1}_+:=\RR^n\times (0,+\infty)$ which is solution to the local elliptic problem
$$
 \left\{
\begin{array}{ll}
\displaystyle{ -\div(y^{1-2s}\nabla w)=  0 } \quad &\mbox{\rm in } \RR^{n+1}_+ \\
\,\,\,\,\, w(x,0)=u(x) \quad &\mbox{\rm in } \RR^{n}.
\end{array}\right.
$$

\indent In order to define the fractional Laplacian operator in bounded domains, the above procedure has been adapted in \cite{colorado0, cabretan}. To overcome the nonlocalicity of the operator $(-\Delta+m^{2})^{s}$ in (\ref{P}), we use the type extension in periodic setting developed in \cite{A2, A3}.\par
More precisely, for any $u\in \h$, there exists a unique $v\in \X$ which solves
\begin{equation*}
\left\{
\begin{array}{ll}
-\dive(y^{1-2s} \nabla v)+m^{2}y^{1-2s}v =0 &\mbox{ in }\mathcal{S}_{T}:=(0,T)^{N} \times (0,\infty)  \\
v_{| {\{x_{i}=0\}}}= v_{| {\{x_{i}=T\}}} & \mbox{ on } \partial_{L}\mathcal{S}_{T}:=\partial (0,T)^{N} \times [0,\infty) \\
v(x,0)=u(x)  &\mbox{ on }\partial^{0}\mathcal{S}_{T}:=(0,T)^{N} \times \{0\}
\end{array},
\right.
\end{equation*}
where $\X$ is the closure of the set of smooth and $T$-periodic (in $x$) functions in $\R^{N+1}_{+}$ with respect to the norm
$$
\|v\|_{\mathbb{X}_{T}^{s}}:=\left(\iint_{\mathcal{S}_{T}} y^{1-2s} (|\nabla v|^{2}+m^{2s} v^{2}) dx dy\right)^{1/2}.
$$
\indent Then, the operator $(-\Delta+m^{2})^{s}$ is obtained as
$$
-\lim_{y\rightarrow 0^{+}} y^{1-2s} \frac{\partial v}{\partial y}(x,y) = \kappa_{s} (-\Delta + m^{2})^{s} u(x)
$$
in weak sense and $\displaystyle{\kappa_{s}:= 2^{1-2s} \frac{\Gamma(1-s)}{\Gamma(s)}}$.

Therefore, we exploit this fact, and instead of (\ref{P}), we investigate the following problem
\begin{equation}\label{R}
\left\{
\begin{array}{ll}
-\dive(y^{1-2s} \nabla v)+m^{2}y^{1-2s}v =0 &\mbox{ in }\mathcal{S}_{T}  \\
\smallskip
v_{| {\{x_{i}=0\}}}= v_{| {\{x_{i}=T\}}} & \mbox{ on } \partial_{L}\mathcal{S}_{T} \\
\smallskip
{\partial_{\nu}^{1-2s} v}=\kappa_{s} [\lambda_{\infty} v+ f(x,v)]   &\mbox{ on }\partial^{0}\mathcal{S}_{T}
\end{array}
\right.
\end{equation}
where
$$
{\partial_{\nu}^{1-2s} v}(x):=-\lim_{y \rightarrow 0^{+}} y^{1-2s} \frac{\partial v}{\partial y}(x,y)
$$
is the conormal exterior derivative of $v$.\par
Since $(\ref{R})$ has a variational structure, its solutions can be found as critical points of the energy functional $\mathcal{J}$ given by
 $$
 \mathcal{J}(v):=\frac{1}{2}\left( \|v\|_{\mathbb{X}_{T}^{s}}^{2}-\lambda_{\infty} \kappa_{s} |\textup{Tr}(v)|_{L^{2}(0,T)^{N}}^{2}\right) -\kappa_{s}\int_{\partial^{0}\mathcal{S}_{T}} F(x,\textup{Tr}(v)) \,dx,
 $$
defined on the space $\mathbb{X}_{T}^{s}$.\par

A natural question
is whether or not these topological and variational methods may be adapted to
equation \eqref{P} and to its generalization in order to extend the classical results known
for \eqref{DBP} to a non-local periodic context.\par

\indent In the first part of the paper we prove that the geometry of the classical Saddle Point Theorem
due to Rabinowitz \cite{r} is respected by the non-local framework: for this we use a functional
analytical setting that is inspired by (but not equivalent to) the fractional Sobolev
spaces, in order to correctly encode the periodic boundary datum in the variational
formulation. Of course, also the compactness property required by this abstract theorem is satisfied
in the non-local setting, again thanks to the choice of the functional setting we work in.\par

\indent In addition, when the nonlinear-term is symmetric, we obtain the existence of multiple periodic solutions to (\ref{P}). More precisely, we adapt the pseudo-index theory due to Bartolo, Benci and Fortunato \cite{bbf}, to prove a multiplicity result for critical points of the even energy functional $\J$; see also \cite{[BCS2],[BCS], bmb1,bmb2} for related topics.\par
\smallskip
More precisely, we are able to prove the following result
\begin{thm}\label{thm2}
Let $m>0,$ $s\in (0, 1)$ and $N\geq 2$. Suppose that $f:\R^{N+1} \rightarrow \R$ is a continuous function satisfying conditions $(f_1)$ and $(f_2)$. Further, let us assume that $f(x, \cdot)$ is odd for a.e. $x\in \R^{N}$ and require that\par
\begin{itemize}
\item[$(C_{\lambda_{\infty}})$] there exist $h, k\in \N$, with $k \geq h$, such that
$$
\lambda_{0}+\lambda_{\infty}<\lambda_{h}\leq \lambda_{k}<\lambda_{\infty}.
$$
\end{itemize}
Then, problem $\eqref{P}$ has at least $k-h+1$ distinct pairs of non-trivial weak solutions in $\mathbb{H}^{s}_{T},$ provided that $\lambda_{\infty}\notin \sigma((-\Delta+m^{2})^{s})$.
\end{thm}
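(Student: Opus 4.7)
The plan is to reformulate the problem through the extended functional $\J$ on $\X$ and to apply the pseudo-index theory of Bartolo--Benci--Fortunato \cite{bbf}. Because $f(x,\cdot)$ is odd, $\J$ is even and of class $C^{1}$ (by $(f_1)$--$(f_2)$ together with the compact trace embedding $\T:\X\to L^{\nu}(0,T)^{N}$ for $\nu\in[1,2^{\sharp}_{s})$), and its critical points correspond, through their traces, to weak solutions of \eqref{P} in $\h$. It therefore suffices to produce $k-h+1$ pairs of nontrivial critical points of $\J$ at strictly positive levels.

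The first ingredient I would establish is a Palais--Smale condition. Given $(v_{n})\subset\X$ with $\J(v_{n})$ bounded and $\J'(v_{n})\to 0$, I first show boundedness of $(v_n)$: if not, $w_{n}:=v_{n}/\|v_{n}\|_{\X}$ admits a weak limit $w$, and dividing the equation $\J'(v_{n})=o(1)$ by $\|v_{n}\|_{\X}$, using $f(x,t)/t\to 0$ at infinity together with the compactness of $\T$, one passes to the limit to conclude that $w$ weakly solves the linear eigenvalue problem with eigenvalue $\lambda_{\infty}$. Since $\lambda_{\infty}\notin\sigma((-\Delta+m^{2})^{s})$, necessarily $w=0$; the strong $L^{2}$-convergence of $\T(w_{n})$ then forces $\|w_{n}\|_{\X}\to 0$, a contradiction. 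Once $(v_n)$ is bounded, the compactness of $\T$ upgrades weak convergence to strong convergence in $\X$ in the standard way.

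Next I perform the spectral splitting. Let $\{\phi_{j}\}_{j\ge 1}$ be the $L^{2}$-orthonormal eigenfunctions of $(-\Delta+m^{2})^{s}$ corresponding to $\lambda_{j}$, and let $E(\phi_{j})\in\X$ denote their periodic $s$-harmonic extensions, for which $\|E(\phi_{j})\|_{\X}^{2}=\kappa_{s}\lambda_{j}$. Define
\baln
V^{-}:=\mathrm{span}\{E(\phi_{1}),\dots,E(\phi_{k})\},\qquad V^{+}:=\mathrm{span}\{E(\phi_{1}),\dots,E(\phi_{h-1})\}^{\perp},
\ealn
so that $\dim V^{-}=k$ and $\codim V^{+}=h-1$. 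Using $\lambda_{k}<\lambda_{\infty}$, the quadratic form $v\mapsto\|v\|_{\X}^{2}-\lambda_{\infty}\kappa_{s}|\T(v)|_{L^{2}}^{2}$ is negative-definite on $V^{-}$; since $F(x,t)=o(t^{2})$ at infinity, a standard argument yields $\J(v)\to-\infty$ as $\|v\|_{\X}\to\infty$ inside $V^{-}$, hence $\sup_{V^{-}}\J<+\infty$. Using instead $\lambda_{h}>\lambda_{0}+\lambda_{\infty}$ and the expansion $F(x,t)=\tfrac{\lambda_{0}}{2}t^{2}+o(t^{2})$ as $t\to 0$, a small-ball estimate gives $\J(v)\ge\alpha>0$ for all $v\in V^{+}$ with $\|v\|_{\X}=\rho$, for some $\rho$ small enough.

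Finally, I would invoke the pseudo-index theorem of \cite{bbf} with the $\Z_{2}$-genus as index, the sphere $V^{+}\cap\{\|v\|_{\X}=\rho\}$ as the ``small'' linking set, and finite-dimensional sub-spheres of $V^{-}$ as the ``large'' ones: the pseudo-index of any symmetric set of genus $j$ in $V^{-}$ is at least $j-\codim V^{+}$, producing $\dim V^{-}-\codim V^{+}=k-h+1$ critical values $\alpha\le c_{h}\le\cdots\le c_{k}<+\infty$ of $\J$; each contributes at least one pair $\{v,-v\}$ of nontrivial critical points, and whenever two such values coincide the deformation lemma forces the associated critical set to have genus $\ge 2$, giving $k-h+1$ distinct pairs overall. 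The main technical obstacle is precisely the Palais--Smale verification, because the purely asymptotically linear behaviour of $f$ prevents any Ambrosetti--Rabinowitz argument: one must use the nonresonance hypothesis $\lambda_{\infty}\notin\sigma((-\Delta+m^{2})^{s})$ in an essential way, and due to the periodic boundary conditions the spectral analysis has to be performed intrinsically in $\X$ rather than borrowed from a classical half-space setting.
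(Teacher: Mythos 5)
Your proposal is correct and follows essentially the same path as the paper: the Palais--Smale verification via the blow-up argument $w_n=v_n/\|v_n\|_\X$ and the nonresonance hypothesis, the spectral splitting $V^-=\mathbb{V}_k$ (with $\dim=k$) and $V^+=\mathbb{V}_{h-1}^\perp$ (with $\operatorname{codim}=h-1$), the two geometric lemmas (positive lower bound on a small sphere in $V^+$ from $\lambda_0+\lambda_\infty<\lambda_h$, boundedness above on $V^-$ from $\lambda_k<\lambda_\infty$), and the application of the Bartolo--Benci--Fortunato pseudo-index theorem with the genus estimate $\gamma(V^-\cap h(S_\rho\cap V^+))\geq\dim V^--\operatorname{codim}V^+=k-h+1$. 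The only cosmetic difference is in the PS argument: you conclude $w=0$ from nonresonance and then contradict $\|w_n\|_\X=1$, whereas the paper first proves strong convergence $w_n\to w$ (hence $w\neq 0$) and then contradicts nonresonance; these are logically equivalent.
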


We emphasize that further difficulties arise in the so--called  ``resonant case'', that is $\lambda_{\infty}\in \sigma((-\Delta+m^{2})^{s})$: indeed, the resonance affects
both the compactness property and
the geometry of the Euler--Lagrange functional arising in a suitable variational approach (cf., e.g., \cite{bbf} and references therein for the classical elliptic case). We will consider this interesting case via some further investigations. Finally, we also point out that our results should be viewed as a periodic nonlocal version of Proposition 1.1 and Theorem 1.2 of \cite{bmb2}.\par

For the sake of completeness we point out that fractional nonlocal equations have by now been widely investigated
from many points of view. Besides \cite{AP0,AP,cabretan,pu1,pu2, secchio1,secchio2} we cite \cite{kms1,kms2,PP,PPNA} and the very recent
paper \cite{va} and references therein for some nice interpretations of nonlocal physical phenomena.\par

As far as we know the results presented here are new for periodic fractional problems. The body of the paper is as follows: in Section $2$ we collect some preliminaries related to the functional setting and the abstract critical point theory that we use to study (\ref{R}): in Section $3$ we prove the existence of a weak solution to (\ref{P}), while in the last Section $4$ we give the proof of Theorem \ref{thm2}.
\section{Preliminaries} \label{Sec2}
\subsection{Functional setting}\label{Sub1}
%In this section we collect some preliminary results related to (\ref{P}) and the reformulated version (\ref{R}), as well as some basic propositions on trace inequality and embedding results firstly proved in \cite{A2} and \cite{A3}.
This section is devoted to the notations used along the present paper. In order to give the weak formulation of problem~\eqref{P}, we need to work in a special functional space. Indeed, one of the difficulties in treating problem~\eqref{P} is related to encoding the periodic boundary condition in the variational formulation. With this respect the standard fractional Sobolev spaces are not sufficient in order to study the problem. We overcome this difficulty by working in a new functional space, whose definition is  recalled here. From now on, we assume $s\in (0, 1)$ and $N\geq 2$. \par
\indent Let
$$
\R^{N+1}_{+}=\{(x,y)\in \R^{N+1}: x\in \R^{N}, y>0 \}
$$
be the upper half-space in $\R^{N+1}$.\par
\indent Let $\mathcal{S}_{T}:=(0,T)^{N}\times(0,\infty)$ be the half-cylinder in $\R^{N+1}_{+}$ with basis $\partial^{0}\mathcal{S}_{T}:=(0,T)^{N}\times \{0\}$ and we denote by $\partial_{L}\mathcal{S}_{T}:=\partial (0,T)^{N}\times [0,+\infty)$ the lateral boundary of $\mathcal{S}_{T}$. \par
With $\|v\|_{L^{r}(\mathcal{S}_{T})}$ we will always denote the norm of $v\in L^{r}(\mathcal{S}_{T})$ and with $|u|_{L^{r}(0,T)^{N}}$ the $L^{r}(0,T)^{N}$ norm of $u \in L^{r}(0,T)^{N}$.\par
Let $\mathcal{C}^{\infty}_{T}(\R^{N})$ be the space of functions
$u\in \mathcal{C}^{\infty}(\R^{N})$ such that $u$ is $T$-periodic in each variable, that is
$$
u(x+Te_{i})=u(x) \mbox{ for all } x\in \R^{N}, i\in \ZZ[1,N].
$$
\indent We define the fractional Sobolev space $\mathbb{H}^{s}_{T}$ as the closure of $\mathcal{C}^{\infty}_{T}(\R^{N})$ endowed by the norm
\begin{equation*}\label{h12norm}
|u|_{\mathbb{H}^{s}_{T}}:=\sqrt{ \sum_{k\in \Z^{N}} (\omega^{2}|k|^{2}+m^{2})^{s} \, |\beta_{k}|^{2}}.
\end{equation*}
\indent Let us introduce the functional space $\mathbb{X}^{s}_{T}$ defined as the completion of
\begin{align*}
\mathcal{C}_{T}^{\infty}(\overline{\R^{N+1}_{+}}):=\Bigl\{&v\in \mathcal{C}^{\infty}(\overline{\R^{N+1}_{+}}): v(x+Te_{i},y)=v(x,y) \\
&\mbox{ for every } (x,y)\in \overline{\R_{+}^{N+1}}, i=1, \dots, N \Bigr\}
\end{align*}
under the $H^{1}(\mathcal{S}_{T},y^{1-2s})$ norm
\begin{equation*}
\|v\|_{\mathbb{X}^{s}_{T}}:=\sqrt{\iint_{\mathcal{S}_{T}} y^{1-2s} (|\nabla v|^{2}+m^{2}v^{2}) \, dxdy} \,.
\end{equation*}
\indent We recall that it is possible to define a trace operator between $\mathbb{X}^{s}_{T}$ and $\mathbb{H}_{T}^{s}$ (see \cite{A2, A3} for details):
\begin{thm}\label{tracethm}
There exists a surjective linear operator $\textup{Tr} : \mathbb{X}^{s}_{T} \rightarrow \mathbb{H}_{T}^{s}$  such that:
\begin{itemize}
\item[$(i)$] $\textup{Tr}(v)=v|_{\partial^{0} \mathcal{S}_{T}}$ for all $v\in \mathcal{C}_{T}^{\infty}(\overline{\R^{N+1}_{+}}) \cap \mathbb{X}^{s}_{T}$;
\item[$(ii)$] $\textup{Tr}$ is bounded and
\begin{equation}\label{tracein}
\sqrt{\kappa_{s}} |\textup{Tr}(v)|_{\mathbb{H}^{s}_{T}}\leq \|v\|_{\mathbb{X}^{s}_{T}},
\end{equation}
for every $v\in \mathbb{X}^{s}_{T}$.
In particular, equality holds in \eqref{tracein} for some $v\in \mathbb{X}^{s}_{T}$ if and only if $v$ weakly solves the following equation
$$
-\dive(y^{1-2s} \nabla v)+m^{2}y^{1-2s}v =0 \, \mbox{ in } \, \mathcal{S}_{T}.
$$
\end{itemize}
\end{thm}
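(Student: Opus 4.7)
\medskip

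\noindent\textbf{Proof plan for Theorem \ref{tracethm}.}

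The strategy is to diagonalize everything via Fourier series in the $x$-variable, reducing the $(N+1)$-dimensional problem to a family of one-dimensional weighted ODE problems on $(0,\infty)$ indexed by $k\in\Z^N$. For $v\in\mathcal{C}^\infty_T(\overline{\R^{N+1}_+})$ write, for each fixed $y\ge 0$,
\[
v(x,y)=\sum_{k\in\Z^N}\beta_k(y)\,\frac{e^{\imath\omega k\cdot x}}{\sqrt{T^N}},\qquad \beta_k(y):=\frac{1}{\sqrt{T^N}}\int_{(0,T)^N}v(x,y)\,e^{-\imath\omega k\cdot x}dx .
\]
Parseval in $x$ followed by Fubini gives
\[
\|v\|_{\mathbb{X}^s_T}^2=\sum_{k\in\Z^N}\int_0^\infty y^{1-2s}\Bigl(|\beta_k'(y)|^2+(\omega^2|k|^2+m^2)|\beta_k(y)|^2\Bigr)dy,
\]
and the trace becomes $\textup{Tr}(v)(x)=\sum_k\beta_k(0)e^{\imath\omega k\cdot x}/\sqrt{T^N}$, so $|\textup{Tr}(v)|_{\mathbb{H}^s_T}^2=\sum_k(\omega^2|k|^2+m^2)^s|\beta_k(0)|^2$. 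The theorem is therefore reduced to a mode-by-mode sharp weighted Hardy-type inequality.

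\medskip

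For each $k\in\Z^N$ set $c_k:=\sqrt{\omega^2|k|^2+m^2}>0$ (here is where $m>0$ really helps: no zero mode issue). Consider, for a given boundary value $a\in\C$, the minimization problem
\[
I_k(a):=\inf\left\{\int_0^\infty y^{1-2s}\bigl(|\phi'(y)|^2+c_k^2|\phi(y)|^2\bigr)dy \,:\, \phi(0)=a\right\}.
\]
The Euler--Lagrange equation is the weighted Bessel-type ODE
\[
-(y^{1-2s}\phi')'+c_k^2 y^{1-2s}\phi=0\qquad\text{on }(0,\infty),
\]
whose unique solution decaying at infinity with $\phi(0)=1$ is, after the substitution $\phi(y)=y^s\psi(c_k y)$, proportional to $y^s K_s(c_k y)$ with $K_s$ the modified Bessel function of the second kind. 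A direct computation (using $K_s(r)\sim 2^{s-1}\Gamma(s)r^{-s}$ as $r\to 0^+$ and the derivative identity $(r^s K_s(r))'=-r^s K_{s-1}(r)$) yields
\[
\lim_{y\to 0^+}y^{1-2s}\phi'(y)=-\kappa_s c_k^{2s},\qquad \kappa_s=2^{1-2s}\Gamma(1-s)/\Gamma(s),
\]
and integration by parts then gives the sharp identity $I_k(a)=\kappa_s c_k^{2s}|a|^2$. Summing over $k$ delivers (\ref{tracein}), with equality precisely when each $\beta_k(\cdot)$ minimizes, i.e.\ solves the weighted ODE, which is exactly the weak formulation of $-\dive(y^{1-2s}\nabla v)+m^2y^{1-2s}v=0$ in $\mathcal{S}_T$.

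\medskip

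Surjectivity follows by explicit construction: given $u=\sum_k\beta_k\,e^{\imath\omega k\cdot x}/\sqrt{T^N}\in\mathbb{H}^s_T$, define the Poisson-type extension
\[
v(x,y):=\sum_{k\in\Z^N}\beta_k\,\theta_k(y)\,\frac{e^{\imath\omega k\cdot x}}{\sqrt{T^N}},\qquad \theta_k(y):=\frac{(c_k y)^s K_s(c_k y)}{2^{s-1}\Gamma(s)},
\]
so that $\theta_k(0)=1$, $\theta_k\in H^1((0,\infty),y^{1-2s})$ and its one-dimensional energy equals $\kappa_s c_k^{2s}$. The Parseval identity just derived yields $\|v\|_{\mathbb{X}^s_T}^2=\kappa_s|u|_{\mathbb{H}^s_T}^2<\infty$, so $v\in\mathbb{X}^s_T$ and $\textup{Tr}(v)=u$. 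Finally one extends the trace $\textup{Tr}$ from smooth functions to all of $\mathbb{X}^s_T$ by density and continuity, using (\ref{tracein}), while property $(i)$ is trivial on $\mathcal{C}^\infty_T(\overline{\R^{N+1}_+})\cap\mathbb{X}^s_T$.

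\medskip

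The main technical obstacle is the Bessel-function step: identifying the decaying solution of the weighted ODE and computing exactly the boundary flux limit $\lim_{y\to 0^+}y^{1-2s}\phi'(y)=-\kappa_s c_k^{2s}$, which is what produces the correct constant $\kappa_s$ in (\ref{tracein}). Once this explicit formula is in hand, everything else (the bound, the equality characterization, surjectivity) is a routine Fourier/Parseval argument. One should also be careful to verify that the formally constructed extension indeed lies in $\mathbb{X}^s_T$, namely it is approximable by elements of $\mathcal{C}^\infty_T(\overline{\R^{N+1}_+})$ in the $\mathbb{X}^s_T$-norm; this is done by truncating the Fourier series to $|k|\le M$ and smoothing in $y$, then letting $M\to\infty$, invoking the dominated convergence principle mode by mode.
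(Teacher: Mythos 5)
Your proposal is correct and follows the same Fourier-mode reduction plus Bessel-function argument that underlies the paper's functional framework: the paper cites \cite{A2,A3} for Theorem~\ref{tracethm} without a self-contained proof, but the unnumbered extension theorem that follows (the explicit formula $v(x,y)=\sum_k\beta_k\,\theta_k(y)\,e^{\imath\omega k\cdot x}/\sqrt{T^N}$ with $\theta_k(y)=\theta(\sqrt{\omega^2|k|^2+m^2}\,y)$ solving $\theta''+\frac{1-2s}{y}\theta'-\theta=0$) is precisely your Bessel solution $\theta(r)=(r^sK_s(r))/(2^{s-1}\Gamma(s))$, and your boundary-flux computation $\lim_{y\to0^+}y^{1-2s}\phi'(y)=-\kappa_s c_k^{2s}$ is exactly what produces the constant $\kappa_s$ in \eqref{tracein}. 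The only point worth spelling out, which you leave implicit, is why the Euler--Lagrange solution actually attains the infimum $I_k(a)$: this is immediate from strict convexity of the quadratic functional, or equivalently by expanding $\phi=\phi_0+\psi$ with $\psi(0)=0$ and checking that the cross term vanishes by the same integration by parts.
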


Moreover, we have the following embedding results:
\begin{thm}\label{compacttracethm}
Let $N\geq 2$ and $s\in (0,1)$. Then  $\textup{Tr}(\mathbb{X}^{s}_{T})$ is continuously embedded in $L^{q}(0,T)^{N}$ for any  $1\leq q \leq 2^{\s}_{s}$.  Moreover,  $\textup{Tr}(\mathbb{X}^{s}_{T})$ is compactly embedded in $L^{q}(0,T)^{N}$  for any  $1\leq q < 2^{\s}_{s}$.
\end{thm}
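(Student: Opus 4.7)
The plan is to reduce the statement to Sobolev-type embeddings for the periodic fractional space $\h$ and then combine them with the trace inequality $\sqrt{\kappa_{s}}|\textup{Tr}(v)|_{\h}\leq \|v\|_{\X}$ from Theorem \ref{tracethm}. Since $m>0$, the weights $(\omega^{2}|k|^{2}+m^{2})^{s}$ and $(1+|k|^{2})^{s}$ are comparable on $\ZZ^{N}$, so $\h$ coincides, with equivalent norm, with the standard Bessel-type fractional Sobolev space on the torus $\R^{N}/(T\ZZ)^{N}$, and it suffices to prove the embeddings for this latter space.

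For the continuous embedding I would argue by cut-off and periodic extension. Given $u\in \mathcal{C}^{\infty}_{T}(\R^{N})$, I extend it by periodicity to $\R^{N}$ and multiply by a fixed $\eta\in C^{\infty}_{c}(\R^{N})$ with $\eta\equiv 1$ on $[0,T]^{N}$ and $\textup{supp}(\eta)\subset(-1,T+1)^{N}$. Working with the Gagliardo double integral (or, equivalently, with the Fourier transform on $\R^{N}$) one checks that $\|\eta u\|_{H^{s}(\R^{N})}\leq C|u|_{\h}$, and the classical fractional Sobolev inequality on $\R^{N}$ then yields
$$
|u|_{L^{2^{\s}_{s}}(0,T)^{N}}\leq |\eta u|_{L^{2^{\s}_{s}}(\R^{N})}\leq C'\,|u|_{\h}.
$$
Density of $\mathcal{C}^{\infty}_{T}(\R^{N})$ in $\h$ extends the bound to all $u\in \h$, and the range $1\leq q<2^{\s}_{s}$ is obtained by H\"older's inequality on the bounded cube $(0,T)^{N}$. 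Composing with $\sqrt{\kappa_{s}}|\textup{Tr}(v)|_{\h}\leq \|v\|_{\X}$ delivers the continuous embedding of $\textup{Tr}(\X)$ into $L^{q}(0,T)^{N}$ for every $1\leq q\leq 2^{\s}_{s}$.

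For the compactness assertion, the interpolation inequality
$$
|u|_{L^{q}(0,T)^{N}}\leq |u|_{L^{2}(0,T)^{N}}^{1-\theta}\,|u|_{L^{2^{\s}_{s}}(0,T)^{N}}^{\theta},\qquad \theta\in[0,1),
$$
together with the continuous embedding proven above, reduces the problem to the compactness of $\h\hookrightarrow L^{2}(0,T)^{N}$. Given a bounded sequence $\{u_{n}\}\subset \h$ with Fourier expansion $u_{n}=\sum_{k\in \ZZ^{N}}\beta_{k}^{n}e^{\imath \omega k\cdot x}/\sqrt{T^{N}}$, I split it into the low-frequency truncation $P_{K}u_{n}:=\sum_{|k|\leq K}\beta_{k}^{n}e^{\imath \omega k\cdot x}/\sqrt{T^{N}}$, which lies in a finite-dimensional subspace of $L^{2}(0,T)^{N}$ and is thus precompact, and a high-frequency remainder satisfying $|u_{n}-P_{K}u_{n}|_{L^{2}(0,T)^{N}}\leq (\omega^{2}K^{2}+m^{2})^{-s/2}|u_{n}|_{\h}$, which tends to zero as $K\to+\infty$ uniformly in $n$. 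A standard diagonal procedure extracts an $L^{2}$-convergent subsequence, and the trace inequality of Theorem \ref{tracethm}(ii) transfers this compactness to $\textup{Tr}(\X)$.

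The main obstacle I foresee is the critical continuous embedding into $L^{2^{\s}_{s}}$: passing from the Fourier-defined norm $|\cdot|_{\h}$ to a quantity on which the sharp fractional Sobolev inequality on $\R^{N}$ applies requires the product-rule estimate $\|\eta u\|_{H^{s}(\R^{N})}\leq C|u|_{\h}$, which, for non-integer $s\in(0,1)$, is most cleanly verified through the Gagliardo double integral combined with the finite support of $\eta u$; everything else in the argument (interpolation, low/high-frequency decomposition, trace estimate) is essentially routine once this localization step is in place.
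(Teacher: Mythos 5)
The paper itself does not prove Theorem~\ref{compacttracethm}: both it and Theorem~\ref{tracethm} are simply recalled from the earlier works \cite{A2,A3}, so there is no in-paper argument to measure your proposal against. That said, your route is sound and essentially the standard one. The two-step scheme---first establishing the critical continuous embedding $\mathbb{H}^{s}_{T}\hookrightarrow L^{2^{\sharp}_{s}}(0,T)^{N}$ by cut-off plus the whole-space fractional Sobolev inequality, then obtaining compactness into $L^{q}$ for $q<2^{\sharp}_{s}$ by interpolating against the compact $L^{2}$ embedding produced via Fourier truncation---is correct, and the passage from $\mathbb{H}^{s}_{T}$ to $\textup{Tr}(\mathbb{X}^{s}_{T})$ through the trace inequality $\sqrt{\kappa_{s}}\,|\textup{Tr}(v)|_{\mathbb{H}^{s}_{T}}\le\|v\|_{\mathbb{X}^{s}_{T}}$ is exactly what is needed to conclude. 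Two details deserve explicit care in a full write-up. First, the interpolation inequality as stated only covers $2\le q<2^{\sharp}_{s}$ (for $q<2$ the exponent $\theta$ would be negative); on that range one should instead use H\"older on the bounded cube, so that the $L^{2}$ compactness transfers trivially---you do this for the continuous case but should say so for the compact one as well. Second, the localization bound $\|\eta u\|_{H^{s}(\RR^{N})}\le C|u|_{\mathbb{H}^{s}_{T}}$, which you rightly flag as the crux, should be spelled out: since $m>0$ the Fourier norm is equivalent to the Bessel norm with weights $(1+|k|^{2})^{s}$, this is in turn equivalent to the Gagliardo seminorm over an enlarged cube by periodicity, and the Gagliardo double integral of $\eta u$ can then be estimated by splitting near- and far-diagonal contributions (or by a fractional Leibniz estimate for the product with the fixed smooth cut-off). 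With these two points filled in, your sketch yields the theorem.
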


%*** DEFINIRE $\mathbb{H}^{-s}_{T}$ NEL CASO SPECIFICO ***
Now, we aim to reformulate the nonlocal problem (\ref{P}) in a local way.
Let $g \in \mathbb{H}^{-s}_{T}$, where
$$
\mathbb{H}^{-s}_{T}:=\left\{g=\sum_{k\in \Z^{N}} g_{k} \frac{e^{\imath \omega k\cdot x}}{{\sqrt{T^{N}}}}: \sum_{k\in \Z^{N}}\frac{|g_{k}|^{2}}{(\omega^{2}|k|^{2}+m^{2})^{s}}<+\infty\right\}
$$
is the dual of $\mathbb{H}^{s}_{T}$, and consider the following two problems:
\begin{equation}\label{P*}
\left\{
\begin{array}{ll}
(-\Delta+m^{2})^{s}u=g &\mbox{ in } (0,T)^{N}  \\
u(x+Te_{i})=u(x) & \mbox{ for all } x\in \R^{N}
\end{array}
\right.
\end{equation}
and
\begin{equation}\label{P**}
\left\{
\begin{array}{ll}
-\dive(y^{1-2s} \nabla v)+m^{2}y^{1-2s}v =0 &\mbox{ in }\mathcal{S}_{T}  \\
v_{| {\{x_{i}=0\}}}= v_{| {\{x_{i}=T\}}} & \mbox{ on } \partial_{L}\mathcal{S}_{T} \\
{\partial_{\nu}^{1-2s} v}=\kappa_{s} g(x)  &\mbox{ on } \partial^{0}\mathcal{S}_{T}
\end{array}.
\right.
\end{equation}
\indent Then we have the following definitions of weak solutions to \eqref{P**} and \eqref{P*} respectively:
\begin{defn}
We say that $v \in \mathbb{X}^{s}_{T}$ is a weak solution to \eqref{P**}
if for every $\varphi \in \mathbb{X}^{s}_{T}$ it holds
$$
\iint_{\mathcal{S}_{T}} y^{1-2s} (\nabla v \nabla \varphi + m^{2} v \varphi  ) \, dxdy=\kappa_{s}\langle g, \textup{Tr}(\varphi)\rangle.
$$
Here $\langle \cdot, \cdot \rangle$ is the duality pairing between $\mathbb{H}^{s}_{T}$ and its dual $\mathbb{H}^{-s}_{T}$.
\end{defn}
\begin{defn}
We say that $u\in \mathbb{H}^{s}_{T}$ is a weak solution to \eqref{P*} if $u=\textup{Tr}(v)$
and $v\in \mathbb{X}^{s}_{T}$ is a weak solution to \eqref{P**}.
\end{defn}

Taking into account Theorem \ref{tracethm} and Theorem \ref{compacttracethm}, it is possible to introduce the notion of extension for a function $u\in \mathbb{H}^{s}_{T}$.\par
 More precisely, the next result holds.
\begin{thm}
Let $u\in \mathbb{H}^{s}_{T}$. Then, there exists a unique $v\in \mathbb{X}^{s}_{T}$ such that
\begin{equation}\label{extPu}
\left\{
\begin{array}{ll}
-\dive(y^{1-2s} \nabla v)+m^{2}y^{1-2s}v =0 &\mbox{ in }\mathcal{S}_{T}  \\
v_{| {\{x_{i}=0\}}}= v_{| {\{x_{i}=T\}}} & \mbox{ on } \partial_{L}\mathcal{S}_{T} \\
v(\cdot,0)=u  &\mbox{ on } \partial^{0}\mathcal{S}_{T}
\end{array}
\right.
\end{equation}
and
\begin{align*}\label{conormal}
-\lim_{y \rightarrow 0^{+}} y^{1-2s}\frac{\partial v}{\partial y}(x,y)=\kappa_{s} (-\Delta+m^{2})^{s}u(x) \mbox{ in } \mathbb{H}^{-s}_{T}.
\end{align*}
We call $v\in \mathbb{X}^{s}_{T}$ the extension of $u\in \mathbb{H}^{s}_{T}$.\par
In particular, if $u=\displaystyle\sum_{k\in \Z^{N}} \beta_{k} \frac{e^{\imath \omega k\cdot x}}{{\sqrt{T^{N}}}}$, then $v$ is given by
\begin{align*}
v(x,y)=\sum_{k\in \Z^{N}} \beta_{k} \theta_{k}(y) \frac{e^{\imath \omega k\cdot x}}{{\sqrt{T^{N}}}},
\end{align*}
where $\theta_{k}(y):= \theta(\sqrt{\omega^{2} |k|^{2}+ m^{2}} y)$ and $\theta(y)\in H^{1}(\R_{+},y^{1-2s})$ solves the following ODE
\begin{equation*} \label{ccv}
\left\{
\begin{array}{cc}
&\theta{''}+\displaystyle\frac{1-2s}{y}\theta{'}-\theta=0 \mbox{ in } \R_{+}  \\
&\theta(0)=1 \mbox{ and } \theta(\infty)=0
\end{array}.
\right.
\end{equation*}
\indent Moreover, $v$ satisfies the properties:
\begin{itemize}
\item[$(i)$] $v$ is smooth for $y>0$ and $T$-periodic in $x$$;$
\item[$(ii)$] $\|v\|_{\mathbb{X}^{s}_{T}}\leq \|z\|_{\mathbb{X}^{s}_{T}}$ for any $z\in \mathbb{X}^{s}_{T}$ such that $\textup{Tr}(z)=u$$;$
\item[$(iii)$] $\|v\|_{\mathbb{X}^{s}_{T}}=\sqrt{\kappa_{s}} |u|_{\mathbb{H}^{s}_{T}}$.
\end{itemize}
\end{thm}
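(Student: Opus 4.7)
The plan is to solve the problem by separation of variables in Fourier series, following the periodic Caffarelli--Silvestre strategy developed in \cite{A2, A3}. Since $u\in\mathbb{H}^{s}_{T}$ has the expansion $u=\sum_{k\in\Z^{N}}\beta_{k}e^{\imath\omega k\cdot x}/\sqrt{T^{N}}$ with $\sum_{k}(\omega^{2}|k|^{2}+m^{2})^{s}|\beta_{k}|^{2}<+\infty$, I would look for $v$ in the form $v(x,y)=\sum_{k}\beta_{k}\theta_{k}(y)e^{\imath\omega k\cdot x}/\sqrt{T^{N}}$, where the profiles $\theta_{k}$ absorb the $y$-dependence. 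Plugging this Ansatz formally into the PDE forces $\theta_{k}$ to satisfy
\begin{equation*}
\theta_{k}''+\frac{1-2s}{y}\theta_{k}'-(\omega^{2}|k|^{2}+m^{2})\theta_{k}=0,\qquad \theta_{k}(0)=1,\quad\theta_{k}(\infty)=0,
\end{equation*}
which, under the rescaling $\theta_{k}(y):=\theta(\sqrt{\omega^{2}|k|^{2}+m^{2}}\, y)$, reduces to the single ODE in the statement. This ODE is a confluent-Bessel type equation; existence and uniqueness of a positive, decaying solution $\theta\in H^{1}(\R_{+},y^{1-2s})$ follow from a classical Sturm--Liouville argument (or from the explicit representation via the modified Bessel function $K_{s}$).

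The next step is to show that the above series actually converges in $\mathbb{X}^{s}_{T}$ and not just formally. Using Parseval in $x$ and the change of variables $t=\sqrt{\omega^{2}|k|^{2}+m^{2}}\,y$, one computes
\begin{equation*}
\|v\|_{\mathbb{X}^{s}_{T}}^{2}=\sum_{k\in\Z^{N}}|\beta_{k}|^{2}(\omega^{2}|k|^{2}+m^{2})^{s}\int_{0}^{\infty}t^{1-2s}\bigl[(\theta'(t))^{2}+\theta(t)^{2}\bigr]\de t.
\end{equation*}
The crucial identity is then $\int_{0}^{\infty}t^{1-2s}[(\theta')^{2}+\theta^{2}]\,\de t=\kappa_{s}$, obtained by integrating by parts and using the ODE: namely $\int_{0}^{\infty}t^{1-2s}(\theta')^{2}\,\de t=[t^{1-2s}\theta'\theta]_{0}^{\infty}-\int_{0}^{\infty}t^{1-2s}\theta^{2}\,\de t$, whence only the boundary term $-\lim_{t\to 0^{+}}t^{1-2s}\theta'(t)$ survives, and this limit equals $\kappa_{s}=2^{1-2s}\Gamma(1-s)/\Gamma(s)$ by the asymptotics of $K_{s}$ near the origin. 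This yields at once $\|v\|_{\mathbb{X}^{s}_{T}}^{2}=\kappa_{s}|u|_{\mathbb{H}^{s}_{T}}^{2}$, proving $(iii)$ and, in particular, convergence of the truncated partial sums to an element $v\in\mathbb{X}^{s}_{T}$.

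It then remains to verify that this $v$ actually weakly solves \eqref{extPu}. Testing with trigonometric polynomials and using orthogonality reduces the weak formulation to the ODE identity, which holds by construction; density of such test functions in $\mathbb{X}^{s}_{T}$ extends the identity to arbitrary $\varphi\in\mathbb{X}^{s}_{T}$. The boundary condition $v(\cdot,0)=u$ in the trace sense follows from the continuity of $\textup{Tr}$ and from $\theta_{k}(0)=1$, while the Dirichlet-to-Neumann identification $-\lim_{y\to 0^{+}}y^{1-2s}\partial_{y}v=\kappa_{s}(-\Delta+m^{2})^{s}u$ in $\mathbb{H}^{-s}_{T}$ follows by multiplying the Fourier series term by term and using again $-\lim_{t\to 0^{+}}t^{1-2s}\theta'(t)=\kappa_{s}$. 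Uniqueness is a straightforward energy argument: if $v_{1},v_{2}$ are two solutions, then $w=v_{1}-v_{2}$ has zero trace and solves the homogeneous problem, hence $\|w\|_{\mathbb{X}^{s}_{T}}^{2}=0$ upon testing with $\varphi=w$. Property $(i)$ is clear from interior regularity applied to the weighted equation for $y>0$ and from the periodicity of each Fourier exponential, while $(ii)$ follows from the variational characterisation: among all $z\in\mathbb{X}^{s}_{T}$ with $\textup{Tr}(z)=u$, the extension $v$ minimises $\|\cdot\|_{\mathbb{X}^{s}_{T}}$ since any competitor $z$ decomposes orthogonally as $z=v+(z-v)$ with $\textup{Tr}(z-v)=0$, so $\|z\|_{\mathbb{X}^{s}_{T}}^{2}=\|v\|_{\mathbb{X}^{s}_{T}}^{2}+\|z-v\|_{\mathbb{X}^{s}_{T}}^{2}\geq\|v\|_{\mathbb{X}^{s}_{T}}^{2}$.

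The main technical obstacle I foresee is the rigorous identification of the boundary behaviour of $\theta$, i.e.\ the computation $-\lim_{t\to 0^{+}}t^{1-2s}\theta'(t)=\kappa_{s}$, since this controls both the normalisation of the trace and the value of the Dirichlet-to-Neumann map; everything else (convergence of the series, Parseval identity, orthogonal minimisation argument) reduces to careful but standard manipulations once this asymptotic constant is pinned down.
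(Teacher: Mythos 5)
Your proposal is correct and follows precisely the separation-of-variables route that the theorem statement itself encodes (the Fourier representation of $v$, the Bessel-type ODE for $\theta$, and the normalisation $\theta(0)=1$), which is the approach the paper delegates to the references \cite{A2,A3}. The key computations — the Parseval reduction $\|v\|_{\mathbb{X}^{s}_{T}}^{2}=\bigl(\int_{0}^{\infty}t^{1-2s}[(\theta')^{2}+\theta^{2}]\,\mathrm{d}t\bigr)\,|u|_{\mathbb{H}^{s}_{T}}^{2}$, the integration by parts using $(t^{1-2s}\theta')'=t^{1-2s}\theta$ to collapse the integral to the boundary term, and the asymptotics of $K_{s}$ giving $-\lim_{t\to 0^{+}}t^{1-2s}\theta'(t)=2^{1-2s}\Gamma(1-s)/\Gamma(s)=\kappa_{s}$ — together with the orthogonality argument for $(ii)$ and the energy argument for uniqueness, are all sound and are exactly what is needed.
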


\indent
Now, we are in the position to reformulate the nonlocal problem (\ref{P}) with periodic boundary conditions, in a local way according the following definitions.
\begin{defn}
We say that $v \in \mathbb{X}^{s}_{T}$ is a weak solution to \eqref{R}
if
$$
\iint_{\mathcal{S}_{T}} y^{1-2s} (\nabla v \nabla \varphi + m^{2} v \varphi  ) \, dxdy=\kappa_s\gamma_{\infty} \int_{\partial^{0}\mathcal{S}_{T}} \textup{Tr}(v)\textup{Tr}(\varphi) dx
$$
$$
\qquad+\lambda\kappa_s\int_{\partial^{0}\mathcal{S}_{T}} f(x, \textup{Tr}(v)) \textup{Tr}(\varphi) dx,
$$
for every $\varphi \in \mathbb{X}^{s}_{T}$.
\end{defn}
Finally, we give the notion of \textit{weak solution} to problem \eqref{P}.
\begin{defn}
We say that $u\in \mathbb{H}^{s}_{T}$ is a weak solution to \eqref{P} if $u=\textup{Tr}(v)$
and $v\in \mathbb{X}^{s}_{T}$ is a weak solution to \eqref{R}.
\end{defn}

\subsection{On periodic eigenvalue problem in the half-cylinder}\label{spettro}
The study of the eigenvalues of a linear operator is a classical topic and many
functional analytic tools of general flavor may be used to deal with it. The result
that we give here is, indeed, more general and more precise than what we need,
strictly speaking, for the proofs of our main results: nevertheless we believed it
was good to have a result stated in detail also for further
reference. Hence, we focus on the following eigenvalue problem
\begin{equation}\label{Pauto2}
\left\{
\begin{array}{ll}
(-\Delta+m^{2})^{s}u=\lambda u &\mbox{ in } (0,T)^{N}   \\
u(x+Te_{i})=u(x)    &\,\forall x \in \R^{N},\, i\in \ZZ[1,N]
\end{array}
\right..
\end{equation}

More precisely, we discuss the weak formulation of \eqref{Pauto2}, which consists in the
following eigenvalue problem:\par
Find $u\in \mathbb{H}^{s}_{T}$ such that $u=\textup{Tr}(v)$
where $v\in \mathbb{X}^{s}_{T}$ and
\begin{align*}\begin{split}
&\iint_{\mathcal{S}_{T}} y^{1-2s} (\nabla v \nabla \varphi + m^{2} v \varphi  ) \, dxdy \\
&\,\,\,\,\,\,\,\,\,\,\,\,\,\,\,\,\,\,\,\,\,\,\,=\lambda\kappa_s\int_{\partial^{0}\mathcal{S}_{T}} \textup{Tr}(v) \textup{Tr}(\varphi) dx,
\end{split}\end{align*}
for every $\varphi \in \mathbb{X}^{s}_{T}$.\par
 In other words, $v\in \mathbb{X}^{s}_{T}$ is a weak solution of the extended problem
\begin{equation}\label{Rer}
\left\{
\begin{array}{ll}
-\dive(y^{1-2s} \nabla v)+m^{2}y^{1-2s}v =0 &\mbox{ in }\mathcal{S}_{T}  \\
\smallskip
v_{| {\{x_{i}=0\}}}= v_{| {\{x_{i}=T\}}} & \mbox{ on } \partial_{L}\mathcal{S}_{T} \\
\smallskip
-\displaystyle\lim_{y \rightarrow 0^{+}} y^{1-2s} \frac{\partial v}{\partial y}(x,y)=\lambda\kappa_{s} v   &\mbox{ on }\partial^{0}\mathcal{S}_{T}
\end{array}
\right..
\end{equation}

We recall that $\lambda\in\RR$ is an eigenvalue of $(-\Delta+m^{2})^{s}$ provided there exists a non-trivial
weak solution of \eqref{Pauto2}.\par

\indent Following the classical spectral theory \cite{RN, Y}, the powers of a positive operator in a bounded domain are defined through the spectral decomposition using the powers of the eigenvalues of the original operator.\par
 \smallskip
%Then, it is easy to derive the following result:
Then, we can derive the following result:
%We recall the following result which derives from the theory for the powers of closed positive self-adjoint unbounded operators in a Hilbert space:
\begin{lem}\label{lemmino}With the above notations the following facts hold:
\begin{itemize}
\item[$(i)$] The operator $(-\Delta+m^{2})^{s}$ has a countable family of eigenvalues $\{\lambda_{\ell}\}_{\ell\in \N}$ which can be written as an increasing sequence of positive numbers
$$
0<\lambda_{1}<\lambda_{2}\leq \dots\leq \lambda_{\ell}\leq \lambda_{\ell+1}\leq \dots
$$
Each eigenvalue is repeated a number of times equal to its multiplicity $\mathopen{(}$which is finite$\mathclose{)}$$;$
 \item[$(ii)$] $\lambda_{\ell}=\mu_{\ell}^{s}$ for all $\ell\in \N$, where $\{\mu_{\ell}\}_{\ell\in \N}$ is the increasing sequence of eigenvalues of $-\Delta+m^{2}$$;$
\item[$(iii)$] $\lambda_{1}=m^{2s}$ is simple, $\lambda_{\ell}=\mu_{\ell}^{s} \rightarrow +\infty$ as $\ell \rightarrow +\infty$$;$
 \item[$(iv)$] The sequence $\{u_{\ell}\}_{\ell\in \N}$ of eigenfunctions corresponding to $\lambda_{\ell}$ is an orthonormal basis of $L^{2}(0, T)^{N}$ and an orthogonal basis of the Sobolevspace $\mathbb{H}^{s}_{T}$.
Let us note that $\{u_{\ell}, \mu_{\ell}\}_{\ell\in \N}$ are the eigenfunctions and eigenvalues of $-\Delta+m^{2}$ under periodic boundary conditions$;$
\item[$(v)$] For any $h\in \N$, $\lambda_{\ell}$ has finite multiplicity, and there holds
$$
\lambda_{\ell}=\min_{u\in \mathbb{P}_{\ell}\setminus \{0\}} \frac{|u|^{2}_{\mathbb{H}^{s}_{T}}}{|u|^{2}_{L^{2}(0, T)^{N}}}   \quad \mbox{$\mathopen{(}$Rayleigh's principle$\mathclose{)}$}
$$
where
$$
\mathbb{P}_{\ell}:=\{u\in \mathbb{H}^{s}_{T}: \langle u, u_{j} \rangle_{\mathbb{H}^{s}_{T}} =0, \mbox{ for } j=1, \dots, \ell-1\}.
$$
\end{itemize}
\end{lem}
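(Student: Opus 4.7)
The plan is to diagonalize $(-\Delta+m^{2})^{s}$ explicitly in the Fourier basis of $L^{2}(0,T)^{N}$. The family
$$
\varphi_{k}(x):=\frac{e^{\imath\omega k\cdot x}}{\sqrt{T^{N}}}\qquad (k\in \Z^{N})
$$
is an orthonormal basis of $L^{2}(0,T)^{N}$ made of eigenfunctions of $-\Delta+m^{2}$ under periodic conditions, with eigenvalues $\mu_{k}:=\omega^{2}|k|^{2}+m^{2}$. By the very Fourier-series definition of $(-\Delta+m^{2})^{s}$ recalled in the Introduction, the same basis diagonalizes $(-\Delta+m^{2})^{s}$ with eigenvalues $\mu_{k}^{s}$. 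This observation immediately yields (ii) and the divergence in (iii).

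To get (i), I would rearrange the countable set $\{\mu_{k}\}_{k\in \Z^{N}}$ in non-decreasing order, counting each value a number of times equal to the cardinality of $\{k'\in \Z^{N} : \omega^{2}|k'|^{2}+m^{2}=\mu_{k}\}$, which is finite because integer lattice points on a sphere are finite in number; this gives the finite-multiplicity statement. The minimum is attained only at $k=0$, so $\mu_{1}=m^{2}$ is simple and hence $\lambda_{1}=m^{2s}$ is simple, completing (iii). For (iv), the $L^{2}$-orthonormality and completeness of $\{\varphi_{k}\}$ are classical (trigonometric polynomials are dense in $L^{2}(0,T)^{N}$); orthogonality in $\mathbb{H}^{s}_{T}$ comes from the direct computation
$$
\langle \varphi_{k},\varphi_{k'}\rangle_{\mathbb{H}^{s}_{T}}=(\omega^{2}|k|^{2}+m^{2})^{s}\,\delta_{k,k'}.
$$
Passing to real-valued eigenfunctions by taking suitable combinations of $\varphi_{k}$ and $\varphi_{-k}$ produces the basis $\{u_{\ell}\}_{\ell\in \N}$ with the stated properties.

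For the Rayleigh principle in (v), any $u\in \mathbb{P}_{\ell}\setminus\{0\}$ expands as $u=\sum_{j\geq \ell} c_{j}u_{j}$ thanks to the orthogonality of $\{u_{j}\}$ in $\mathbb{H}^{s}_{T}$, so
$$
\frac{|u|^{2}_{\mathbb{H}^{s}_{T}}}{|u|^{2}_{L^{2}(0,T)^{N}}}=\frac{\sum_{j\geq \ell}\lambda_{j}|c_{j}|^{2}}{\sum_{j\geq \ell}|c_{j}|^{2}}\geq \lambda_{\ell},
$$
with equality for $u=u_{\ell}$.

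The main point to verify carefully is the bridge between the Fourier-series notion of eigenfunction and the weak formulation via the extended problem \eqref{Rer}: one has to check that $(-\Delta+m^{2})^{s}u=\lambda u$ in $\mathbb{H}^{-s}_{T}$ is equivalent to $v=\mathrm{ext}(u)\in \mathbb{X}^{s}_{T}$ solving \eqref{Rer} weakly with that $\lambda$. This is exactly the content of the trace/extension package established in Subsection \ref{Sub1}, once combined with the identity $\|v\|^{2}_{\mathbb{X}^{s}_{T}}=\kappa_{s}|u|^{2}_{\mathbb{H}^{s}_{T}}$; no genuinely new compactness argument is needed here, because the spectral picture is already forced by the explicit Fourier diagonalization. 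I expect this equivalence, rather than the spectral computation itself, to be the only nontrivial point, and it is essentially a bookkeeping exercise on top of the extension theorems already available.
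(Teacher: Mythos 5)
Your proof is correct, but it takes a genuinely different route from the paper's. The paper does \emph{not} diagonalize explicitly; it instead shows that the inverse operator $(-\Delta+m^{2})^{-s}:L^{2}(0,T)^{N}\to L^{2}(0,T)^{N}$ is self-adjoint and positive (both by Parseval-type computations on the Fourier coefficients) and compact (via the isometry $|u_{j}|_{\mathbb{H}^{s}_{T}}=|v_{j}|_{\mathbb{H}^{-s}_{T}}$ for $u_{j}=(-\Delta+m^{2})^{-s}v_{j}$, combined with the compact embedding of $\mathbb{H}^{s}_{T}$ into $L^{2}(0,T)^{N}$), and then invokes the abstract spectral theorem for compact self-adjoint positive operators, from which \emph{all five} assertions --- countable discrete spectrum, finite multiplicities, orthonormal eigenbasis, and the min-max/Rayleigh characterization --- follow at once. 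You bypass the abstract theorem entirely: you observe that the trigonometric system $\{\varphi_{k}\}_{k\in\Z^{N}}$ already diagonalizes $(-\Delta+m^{2})^{s}$ with eigenvalues $(\omega^{2}|k|^{2}+m^{2})^{s}$, and then read off (i)--(v) from the arithmetic of lattice points and from the monotonicity of $t\mapsto t^{s}$. Both arguments are sound. Yours is more elementary and fully explicit (the eigenvalues, eigenfunctions and multiplicities are concrete, and the Rayleigh principle is a one-line computation with the eigenbasis), but it is tied to the very special periodic/Fourier structure. The paper's compactness argument is more portable: it would survive a change of boundary conditions or a variable-coefficient principal part where no explicit eigenbasis is available, and it lets one delegate the standard conclusions to the general theory rather than re-deriving completeness and min-max by hand. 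Your final remark about reconciling the Fourier-series notion of eigenfunction with the weak formulation via the extended problem \eqref{Rer} is well placed; the paper handles exactly that consistency through the trace and extension theorems in Subsection \ref{Sub1}, not inside the proof of the lemma.
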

\begin{proof}
It is enough to prove that $(-\Delta+m^{2})^{-s}:L^{2}(0,T)^{N} \rightarrow L^{2}(0,T)^{N}$ is a self-adjoint, positive  and compact operator.\\
Firstly we observe that if $u=\sum_{k\in \Z^{N}} c_{k} e^{\imath \omega k\cdot x}$ and $v=\sum_{k\in \Z^{N}} d_{k} e^{\imath \omega k\cdot x}$ belong to $L^{2}(0,T)^{N}$, then
\begin{align*}
\langle(-\Delta+m^{2})^{-s}u, v\rangle_{L^{2}(0, T)^{N}}&=\sum_{k\in \Z^{N}} \frac{c_{k}}{(\omega^{2}|k|^{2}+m^{2})^{s}} \bar{d}_{k}\\
&=\sum_{k\in \Z^{N}} c_{k}\frac{\bar{d}_{k}}{(\omega^{2}|k|^{2}+m^{2})^{s}}\\
&=\langle u, (-\Delta+m^{2})^{-s}v\rangle_{L^{2}(0, T)^{N}}
\end{align*}
that is $(-\Delta+m^{2})^{-s}$ is self-adjoint.\\
Clearly, for $u=\sum_{k\in \Z^{N}} c_{k} e^{\imath \omega k\cdot x}\in L^{2}(0,T)^{N}$ we have
$$
\langle(-\Delta+m^{2})^{-s}u, u\rangle_{L^{2}(0, T)^{N}}=\sum_{k\in \Z^{N}} \frac{|c_{k}|^{2}}{(\omega^{2}|k|^{2}+m^{2})^{s}}\geq 0
$$
and $\langle (-\Delta+m^{2})^{-s}u, u\rangle_{L^{2}(0, T)^{N}}>0$ if $u\neq 0$.

\indent
Finally, we show that $(-\Delta+m^{2})^{-s}$ is compact. Let $\{v_{j}\}_{j\in \N}$ be a bounded sequence in $L^{2}(0, T)^{N}$ and let us denote by $\{d^{j}_{k}\}_{k\in \Z^{N}}$ its Fourier coefficients.. Since $L^{2}(0, T)^{N}\subset \mathbb{H}_{T}^{-s}$, it is clear that $\{v_{j}\}_{j\in \N}$ is bounded in $\mathbb{H}_{T}^{-s}$.
Let us denote by $u_{j}=(-\Delta+m^{2})^{-s} v_{j}$. \\
Then 
$$ 
|u_{j}|^{2}_{\mathbb{H}^{s}_{T}}=\sum_{k\in \Z^{N}}(\omega^{2}|k|^{2}+m^{2})^{s} \frac{|d^{j}_{k}|^{2}}{(\omega^{2}|k|^{2}+m^{2})^{2s}}=|v_{j}|_{\mathbb{H}^{-s}_{T}}^{2},
$$
that is $\{u_{j}\}_{j\in \N}$ is a bounded sequence in $\mathbb{H}^{s}_{T}$. By using the compactness of $\mathbb{H}^{s}_{T}$ into $L^{2}(0, T)^{N}$, we deduce that $\{u_{j}\}_{j\in \N}$ admits a convergent subsequence in $L^{2}(0, T)^{N}$. As a consequence $(-\Delta+m^{2})^{-s} v_{j}$ strongly converges in $L^{2}(0, T)^{N}$.
\end{proof}

\indent
Now, we aim to find some useful relation between the eigenvalues $\{\lambda_{j}\}_{j\in\NN}$ of $(-\Delta+m^{2})^{s}$ and the corresponding extended eigenvalue problem in the half-cylinder $\mathcal{S}_{T}$,
\begin{equation}\label{ERP}
\left\{
\begin{array}{ll}
-\dive(y^{1-2s} \nabla v)+m^{2}y^{1-2s}v =0 &\mbox{ in }\mathcal{S}_{T} \\
\smallskip
v_{| {\{x_{i}=0\}}}= v_{| {\{x_{i}=T\}}} & \mbox{ on } \partial_{L}\mathcal{S}_{T}\\
\smallskip
{\partial_{\nu}^{1-2s} v}=\lambda_{j}\kappa_{s} v   &\mbox{ on }\partial^{0}\mathcal{S}_{T}
\end{array}.
\right.
\end{equation}
\indent Let us introduce the following notations. Set
\begin{equation}\label{defvh}
\mathbb{V}_{h}:=Span\{v_{1}, \dots, v_{h}\},
\end{equation}
where every $v_{j}$ solves (\ref{ERP}).
Clearly $\T(v_{j})=u_{j}$ for all $j\in \N$, where $\{u_{j}\}_{j\in\NN}$ is the basis of eigenfunctions in $\h$, defined in Lemma \ref{lemmino}. For any $h\in \N$ we define
\begin{equation}\label{defvhort}
\mathbb{V}_{h}^{\perp}:=\{v\in \X: \langle v, v_{j} \rangle_{\X} =0, \mbox{ for } j=1, \dots, h\}.
\end{equation}
\indent Since $v_{j}$ solves (\ref{ERP}), then we deduce that
$$
\mathbb{V}_{h}^{\perp}=\{v\in \X: \langle \T(v), \T(v_{j}) \rangle_{L^{2}(0, T)^{N}} =0, \mbox{ for } j=1, \dots, h\}.
$$
Then $ \X=\mathbb{V}_{h}\bigoplus \mathbb{V}_{h}^{\perp}$. Let us point out that the trace operator is bijective on $E:=\{v\in \X: v \mbox{ solves } (\ref{extPu})\}$.\par
 Indeed, if $\tilde{v}_{1}$ and $\tilde{v}_{2}$ are the extension of $\tilde{u}_{1},\tilde{u}_{2} \in \h$ respectively, then
\begin{equation}\label{*}
\langle \tilde{v}_{i}, \varphi \rangle_{\X} = k_{s} \langle \tilde{u}_{i}, \T(\varphi) \rangle_{\h} \quad \forall \varphi \in \X, i=1,2.
\end{equation}
If $\tilde{u}_{1}=\T(\tilde{v}_{1})= \T(\tilde{v}_{2})=\tilde{u}_{2}$, from (\ref{*}) follows that
$$
\langle \tilde{v}_{1}-\tilde{v}_{2}, \varphi \rangle_{\X}=0 \quad \forall \varphi \in \X,
$$
so we deduce that $\tilde{v}_{1}=\tilde{v}_{2}$, that is $\T$ is injective on $E$.\par
\indent From this and the linearity of the trace operator $\T$, we have
$$
\dim \mathbb{V}_{h}=\dim Span\{\T(v_{1}), \cdots, \T(v_{h})\}=h.
$$
Now we prove that $\|\cdot\|_{\X}$ and $|\cdot|_{2}$ are equivalent norms on the finite dimensional space $\mathbb{V}_{h}$.\par More precisely, for any $v\in \mathbb{V}_{h}$, it results
\begin{align}\label{eqnorm}
\kappa_{s}m^{2s} |\T(v)|_{2}^{2}\leq \|v\|_{\mathbb{X}^{s}}^{2}\leq \kappa_{s}\lambda_{h} |\T(v)|_{2}^{2}.
%\leq \frac{\lambda_{h}}{m^{2s}} ||v||_{\mathbb{X}^{s}}^{2}.
\end{align}
Firstly we note that $\{v_{j}\}_{j\in \N}$ is an orthogonal system in $\X$, since $\{\T(v_{j})\}_{j\in \N}$ is an orthonormal system in $L^{2}(0, T)^{N}$, and $v_{j}$ satisfies
$$
\langle z, v_{j} \rangle_{\X}=\kappa_{s} \lambda_{j}\langle \T(z), \T(v_{j}) \rangle_{L^{2}(0, T)^{N}} \mbox{ for all } z\in \mathbb{X}^{s}, j\in \N.
$$
\indent Then, by using the fact that $\{\lambda_{j}\}_{j\in \N}$ is an increasing sequence (see $(i)$ of Lemma \ref{lemmino}), and by trace inequality (\ref{tracein}), for $v=\sum_{j=1}^{h} \alpha_{j} v_{j}\in \mathbb{V}_{h}$ we have
\begin{align*}
\kappa_{s}m^{2s} |\T(v)|_{2}^{2}&\leq \|v\|_{\mathbb{X}^{s}}^{2}=\sum_{j=1}^{h} \alpha_{j}^{2}\|v_{j}\|^{2}_{\mathbb{X}^{s}}\\
&=\kappa_{s}\sum_{j=1}^{h} \lambda_{j} \alpha_{j}^{2}|\T(v_{j})|^{2}_{2}\leq \kappa_{s}\lambda_{h}\sum_{j=1}^{h} \alpha_{j}^{2}|\T(v_{j})|^{2}_{2}\\
&= \kappa_{s}\lambda_{h} |\T(v)|_{2}^{2}. 
%\leq \frac{\lambda_{h}}{m^{2s}} \|v\|_{\mathbb{X}^{s}}^{2}.
\end{align*}
\indent Finally we prove that for any $v\in \mathbb{V}_{h}^{\perp}$ the following inequality holds
$$
\lambda_{h+1} |\T(v)|_{2}^{2}\leq \frac{1}{\kappa_{s}}\|v\|^{2}_{\X}.
$$
Fix $v\in \mathbb{V}_{h}^{\perp}$. Then $\T(v)\in \mathbb{P}_{h+1}$.
Indeed $\T(v_{j})=u_{j}$ is a weak solution to $(-\Delta+m^{2})^{s}u=\lambda_{j} u$ and by using the fact that
$$
\langle \T(v), \T(v_{j}) \rangle_{L^{2}(0, T)^{N}}=0 \mbox{ for every } j=1, \dots, h,
$$
we can infer that $\langle \T(v), \T(v_{j}) \rangle_{\h}=0,$ for every $j=1, \dots, h$.\par
As a consequence, by using the variational characterization $(v)$ of Lemma \ref{lemmino} and the trace inequality (\ref{tracein}), we get
\begin{equation}\label{C}
\lambda_{h+1} |\T(v)|_{2}^{2}\leq  |\T(v)|^{2}_{\h}\leq \frac{1}{\kappa_{s}}\|v\|^{2}_{\X}.
\end{equation}

\subsection{An abstract critical point theorem}\label{INDI}

\noindent
Since (\ref{R}) has a variational structure, in this section we provide the main classical tools that we use for its study.

Let us denote by $(E,\|\cdot\|_E)$ a Banach space,
$(E',\|\cdot\|_{E'})$ its dual, $\Phi$ a $C^1$ functional on $E$, $\Phi^b := \{e\in E : \Phi(e)\leq b\}$ the sublevel of $\Phi$ corresponding to $b\in \bar\R:=\R\cup\{\pm\infty\}$ and by $$K_c  := \{e \in E:\ \Phi(e) = c,\ {\rm d}\Phi(e) = 0\}$$ the set of the critical points of $\Phi$
in $E$ at the critical level $c \in \R$.

The func\-tio\-nal $\Phi$ satisfies the {\em Palais--Smale condition, briefly $(\rm PS)$, at level $c$} ($c \in \R$),
if any sequence $\{u_j\}_{j\in\NN} \subseteq E$ such that
\begin{equation}\label{cer}
\lim_{j \to +\infty}\Phi(u_j) = c\quad\mbox{and}\quad
\lim_{j \to +\infty}\|{\rm d}\Phi(u_j)\|_{E'} = 0
\end{equation}
converges in $E$, up to subsequences. In general,
if $-\infty\leq a<b\leq +\infty$, $\Phi$ satisfies $(\rm PS)$ in $(a,b)$ if so is at each level $c\in (a,b)$.\par

As usual, the classical Ambrosetti-Rabinowitz condition plays a crucial role in proving that every Palais-Smale sequence is bounded, as well as the so called mountain-pass geometry is satisfied. However, even dealing with different problems than ours, several authors studied different assumptions that still allow to apply min-max procedure in order to assure the existence of critical points. For asymptotically linear problems we show a result that moves along this direction.\par

Let us recall some basic notions of the index theory
for an even functional with symmetry group $\Z_2 = \{{\rm id}, -{\rm id}\}$. Let us set
\[
\begin{split}
\Sigma \ :=\ \{A \subseteq E:\ &A \ \hbox{closed and symmetric w.r.t. the origin,}\\
&\hbox{i.e. $-e \in A$ if $e \in A$}\}\end{split}
\]
and
\[
{\mathcal H} :=\{h\in C(E,E): h \mbox{ odd} \}.
\]
\indent For $A \in \Sigma$, $A\ne\emptyset$, the genus of $A$ is
\[
\gamma(A) \ :=\ \inf\{m \in \N:\  \exists \psi \in C(A,\R^m\setminus\{0\})\ \hbox{s.t.}\
\psi(-e) = - \psi(e), \forall e\in A\}
\]
if such an infimum exists, otherwise $\gamma(A) = +\infty$. Assume $\gamma(\emptyset) = 0$.

The index theory $(\Sigma,{\mathcal H}, \gamma)$ related to $\Z_2$ is also
called  genus (we refer for more details to
 \cite[Section II.5]{s}).\par
  The pseudo--index related to the genus and $S\in\Sigma$ is the triplet
$(S, {\mathcal H}^\ast,\gamma^\ast)$ such that
${\mathcal H}^\ast$ is a group of odd homeomorphisms and $\gamma^\ast: \Sigma\longrightarrow \N\cup\{+\infty\}$ is the map defined by
\[
\gamma^\ast(A) :=\min _{h\in {\mathcal H}^\ast}\gamma(h(A)\cap S),  \;\; \forall\, A\in\Sigma
\]
(cf. \cite{b} for more details).

The proof of our main multiplicity result, Theorem \ref{thm2}, is based on the following abstract result proved in \cite[Theorem 2.9]{bbf}.

\begin{thm}\label{group}
Let $a,b, c_0, c_\infty\in\bar\R$,
$-\infty\leq a<c_0<c_\infty<b\leq +\infty$,
$\Phi$ be an even functional, $(\Sigma,\mathcal H,\gamma)$
the genus theory on $E$, $S\in\Sigma$, $(S, {\mathcal H}^\ast,\gamma^\ast)$ the pseudo-index
theory related to the genus and $S$, with
$$
{\mathcal H}^\ast :=\{h\in{\mathcal H}: h \mbox{ bounded homeomorphism}
$$
$$
\quad\quad\quad\quad\quad\,\,\,\mbox{such that}\,\, h(e)=e \mbox{ if } e\not\in \Phi^{-1}((a,b))\}.
$$
Assume that$:$
\begin{itemize}
\item[$(\rm i)$] the
functional $\Phi$ satisfies $(\rm PS)$ in $(a,b)$$;$
\item[$(\rm ii)$] $S\subseteq \Phi^{-1}([c_0,+\infty))$$;$
\item[$(\rm iii)$] there exist $\tilde k\in\N$ and $\tilde A\in\Sigma$ such that
$\tilde A\subseteq \Phi^{c_\infty}$ and $\gamma^\ast(\tilde A)\geq \tilde k$.
\end{itemize}
Then, setting $\Sigma_i^\ast:= \{A\in \Sigma: \gamma^\ast(A)\geq i\}$, the numbers
\begin{equation}\label{value}
c_i:=\inf_{A\in \Sigma_i^\ast}\sup_{e\in A}\Phi(e), \quad \forall\, i\in \{1,\ldots, \tilde k\},
\end{equation}

\noindent are critical values for $\Phi$ and
\[
c_0\leq c_1\leq \ldots\leq c_{\tilde k}\leq c_\infty.
\]
Furthermore, if $c=c_i=\ldots =c_{i+r}$, with $i\geq 1$ and $i+r\leq \tilde k$, then $\gamma(K_c)\geq r+1$.
\end{thm}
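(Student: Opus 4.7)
The plan is to establish the theorem by a classical min-max argument within pseudo-index theory, tracking the topological complexity of the sublevel sets of $\Phi$ via $\gamma^{\ast}$. First, I would verify that the values $c_{i}$ defined in \eqref{value} are well-defined and ordered: $c_0\leq c_1\leq \ldots\leq c_{\tilde k}\leq c_\infty$. The monotonicity in $i$ follows at once from the inclusion $\Sigma^{\ast}_{i+1}\subseteq \Sigma^{\ast}_{i}$. The upper bound $c_{\tilde k}\leq c_\infty$ is guaranteed by hypothesis $(\mathrm{iii})$, since $\tilde A\in \Sigma^{\ast}_{\tilde k}$ and $\tilde A\subseteq \Phi^{c_\infty}$. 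The lower bound $c_1\geq c_0$ exploits $(\mathrm{ii})$: for any $A\in \Sigma^{\ast}_1$, choosing $h=\mathrm{id}\in\mathcal{H}^{\ast}$ in the definition of $\gamma^{\ast}$ yields $\gamma(A\cap S)\geq 1$, so $A\cap S$ is non-empty and $\sup_{A}\Phi\geq c_0$.

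Second, and this is the crux, I would show that each $c_i$ is a critical value and that coincidences force high genus of $K_{c}$. Suppose by contradiction that $c:=c_{i}=c_{i+1}=\ldots=c_{i+r}$ with $1\leq i$, $i+r\leq \tilde k$, and $\gamma(K_c)\leq r$. By $(\mathrm{i})$ the set $K_c$ is compact and symmetric; by the continuity property of the genus, there exists a symmetric open neighborhood $N\supseteq K_c$ with $\gamma(\overline{N})=\gamma(K_c)\leq r$. The next step is a quantitative deformation lemma: using a pseudo-gradient vector field for $\Phi$ built to be odd (which is possible because $\Phi$ is even) and cut off so that the associated flow acts trivially outside $\Phi^{-1}((a,b))$, one produces, for some $\varepsilon>0$ small, a map $h\in\mathcal{H}^{\ast}$ with
\[
h\bigl(\Phi^{c+\varepsilon}\setminus N\bigr)\subseteq \Phi^{c-\varepsilon}.
\]

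Third, I would close the contradiction using the algebra of the pseudo-index. By definition of $c_{i+r}$ one selects $A\in \Sigma^{\ast}_{i+r}$ with $\sup_{A}\Phi<c+\varepsilon$, so $A\subseteq \Phi^{c+\varepsilon}$. The subadditivity of $\gamma^{\ast}$ with respect to closed symmetric covers, $\gamma^{\ast}(A)\leq \gamma^{\ast}(\overline{A\setminus N})+\gamma(\overline{N})$, yields $\gamma^{\ast}(\overline{A\setminus N})\geq (i+r)-r=i$. Since every $h\in\mathcal{H}^{\ast}$ is an odd homeomorphism, composition of odd maps shows $\gamma^{\ast}(h(\overline{A\setminus N}))\geq \gamma^{\ast}(\overline{A\setminus N})\geq i$, so $h(\overline{A\setminus N})\in \Sigma^{\ast}_{i}$. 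But $h(\overline{A\setminus N})\subseteq \Phi^{c-\varepsilon}$, hence $c_i\leq c-\varepsilon<c=c_i$, a contradiction. This argument, applied with $r=0$, shows that each $c_i$ is critical; with $r\geq 1$, it delivers $\gamma(K_c)\geq r+1$.

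The main obstacle is the construction of the deformation $h$ within the restricted class $\mathcal{H}^{\ast}$: one must simultaneously (a) preserve the $\Z_{2}$-symmetry, so that the flow is odd, (b) enforce the boundary condition $h(e)=e$ outside $\Phi^{-1}((a,b))$ via a smooth cut-off depending on $\Phi$, and (c) guarantee boundedness and the strict descent past the neighborhood $N$ of $K_c$ using only the $(\mathrm{PS})$ condition at levels in $(a,b)$. Once this deformation lemma is in hand, both the monotonicity of $\gamma^{\ast}$ under $\mathcal{H}^{\ast}$ and the subadditivity inequality above are essentially formal consequences of the corresponding properties of the genus, and the proof concludes as sketched.
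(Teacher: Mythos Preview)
Your outline is a correct sketch of the classical proof of this abstract pseudo-index result. Note, however, that the paper does \emph{not} supply its own proof of this theorem: it is quoted verbatim as \cite[Theorem~2.9]{bbf} and used as a black box in the proof of Theorem~\ref{thm2}. So there is no ``paper's own proof'' to compare against; your argument is essentially the one given in \cite{bbf}, which is exactly what the authors invoke.

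A couple of minor remarks on your write-up. In the third step you use the mixed subadditivity $\gamma^{\ast}(A)\le \gamma^{\ast}(\overline{A\setminus N})+\gamma(\overline{N})$ and the invariance $\gamma^{\ast}(h(B))=\gamma^{\ast}(B)$ for $h\in\mathcal H^{\ast}$; both are genuine properties of the pseudo-index (the latter because $\mathcal H^{\ast}$ is a group under composition), but since you are writing a self-contained proof it would be worth stating them explicitly as lemmas rather than asserting them in passing. Likewise, the equivariant deformation lemma with the extra constraint $h(e)=e$ outside $\Phi^{-1}((a,b))$ is precisely \cite[Theorem~1.3]{bbf}; citing it (or sketching why the cut-off in $\Phi$ keeps the flow in $\mathcal H^{\ast}$) would complete the argument.
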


\begin{remark}
If $\Phi\in C^{1}(E, \R)$ and $E$ is a Hilbert space, by Riesz theorem there exists a unique $\Phi'(u)\in E$ such that
$$
\langle \Phi'(u),v \rangle= {\rm d}\Phi(u)[v] \quad \forall v\in E.
$$
$\Phi'(u)$ is called the gradient of $\Phi$ in $u$. With this notation, a critical point of $\Phi$ is a solution to $\Phi'(u)=0$.
\end{remark}

\begin{remark}\label{baba}{\rm
In the applications, a lower bound for the pseudo--index of a suitable $\tilde A$ as in $(\rm iii)$ of Theorem \ref{group} is needed: considering  the genus theory $(\Sigma,\mathcal H,\gamma)$ on $E$
and $V, W$ two closed subspaces of $E$, if
$$\dim V <+\infty \quad \hbox{ and } \quad  \codim\, W<+\infty,$$
then
\[
\gamma(V\cap h(\partial B\cap W))
\geq \dim V - \codim\, W
\]
for every bounded $h\in\mathcal{H}$ and every open bounded symmetric neighbourhood $B$ of $0$ in $E$ (cf.
\cite[Theorem A.2]{bbf}).
}
\end{remark}

%------------------------------------------------------------------------------------

\section{periodic solutions via saddle point theorem}\label{principale}

In order to obtain weak solutions of problem (\ref{R}), we study the critical points of the following functional
\begin{equation}\label{1bis}
\J(v):=\displaystyle\frac 12 \|v\|^{2}_{\X}
-\displaystyle\frac{\lambda_\infty \kappa_{s}}2 |\T(v)|_{L^{2}(0, T)^{N}}^2 dx - \kappa_{s}\int_{\partial^{0}\mathcal{S}_{T}} F(x,\T(v)) dx,
\end{equation}
defined on the Hilbert space $\X$ and where, as usual, we set
$$F(x,t):=\displaystyle\int_{0}^{t} f(x,\tau) d\tau.$$

Now, we prove that the functional $\J$ is smooth and it has the geometric structure required by the
Saddle Point Theorem (see, for instance, \cite[Theorem 4.6]{r}).\par
Here, we use the structural assumptions on $f$ to deduce some bounds from above and below for the nonlinear term and
its primitive. This part is quite standard and does not take into account the nonlocal features of the problem.\par
\indent By the growth condition $(f_1)$ and \eqref{conmu2} for all $\varepsilon>0$ there exists $a_\varepsilon>0$ such that
\begin{equation}\label{infymin}
|f(x,t)|\leq\varepsilon |t| + a_\varepsilon,
\quad \hbox{ for a.e. } x\in (0, T)^{N},\,\,  \forall\,  t\in\R.
\end{equation}
so, in particular, we deduce that
\begin{equation}\label{propF}
|F(x,t)|\leq c_1(1+t^2), \quad\hbox{ for a.e. } x\in (0, T)^{N}, \forall\, t\in\R.
\end{equation}
\indent Then, by using Theorem \ref{compacttracethm}, it follows that the functional $\J$ is well-defined and $\J \in C^{1}(\X, \R)$.
In particular, its gradient is given by
\begin{align}\begin{split}\label{differo}
\langle \J'(v), \varphi \rangle  &= \iint_{\mathcal{S}_{T}} y^{1-2s} (\nabla v \nabla \varphi + m^{2} v \varphi) dxdy \\
&-{\lambda_\infty}\kappa_{s} \int_{\partial^{0}\mathcal{S}_{T}} \T(v) \T(\varphi) dx - \kappa_{s}\int_{\partial^{0}\mathcal{S}_{T}} f(x,\T(v)) \T(\varphi) dx,
\end{split}\end{align}
for every $\varphi \in \X$.

Therefore, in order to apply critical point methods, we have
to check the validity of the Palais-Smale condition, that is
\begin{center}
\textit{for any $c\in \RR$ any sequence $\{v_j\}_{j\in\NN}$ in $\X$ such that\\
$\mathcal J(v_j)\to c \,\, \mbox{and}\,\, \sup\Big\{
\big|\langle\,\mathcal J'(v_j),\varphi\,\rangle \big|\,: \;
\varphi\in \X\,,
\|\varphi\|_{\X}=1\Big\}\to 0$\\
as $j\to +\infty$, admits a subsequence strongly convergent in
$\X$\,.}
\end{center}

Hence, in the next proposition we prove that in the non-resonant case the functional $\J$ satisfies the $(\rm PS)$ condition.
\begin{prop}\label{palmsmin}
Assume that $(f_1)$ and \eqref{conmu2} of $(f_2)$ hold. Then,
if $\lambda_\infty\not\in\sigma((-\Delta+m^{2})^{s})$, the functional $\J$ verifies the $(\rm PS)$ in $\R$.
\end{prop}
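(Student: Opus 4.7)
\medskip

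\noindent \textbf{Proof plan.} Let $\{v_j\}_{j\in\NN}\subset\X$ satisfy $\J(v_j)\to c$ and $\J'(v_j)\to 0$ in the dual of $\X$. The plan is to establish boundedness of $\{v_j\}$ first (exploiting the non-resonance hypothesis in place of an Ambrosetti--Rabinowitz type condition), and then upgrade weak convergence to strong convergence in $\X$ by exploiting the compactness of the trace operator (Theorem \ref{compacttracethm}).

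\medskip

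\noindent \emph{Step 1 (boundedness).} Suppose by contradiction that $\|v_j\|_{\X}\to +\infty$ along a subsequence. Set $w_j:=v_j/\|v_j\|_{\X}$, so $\|w_j\|_{\X}=1$, and extract a further subsequence with $w_j\rightharpoonup w$ in $\X$ and $\T(w_j)\to \T(w)$ in $L^2(0,T)^N$ by Theorem \ref{compacttracethm}. Using \eqref{infymin} together with $\lim_{|t|\to\infty}f(x,t)/t=0$, a splitting argument over the sets $\{|\T(v_j)|\le R\}$ and $\{|\T(v_j)|>R\}$ shows that
\[
\frac{f(\cdot,\T(v_j))}{\|v_j\|_{\X}}\longrightarrow 0 \quad\text{in }L^2(0,T)^N.
\]
Dividing $\langle\J'(v_j),\varphi\rangle$ by $\|v_j\|_{\X}$ and passing to the limit in \eqref{differo}, $w$ satisfies
\[
\iint_{\mathcal{S}_T} y^{1-2s}(\nabla w\,\nabla\varphi + m^2 w\varphi)\,dxdy = \lambda_\infty\kappa_s\int_{\partial^0\mathcal{S}_T}\T(w)\T(\varphi)\,dx,\quad \forall\varphi\in\X,
\]
that is, $\T(w)$ is either an eigenfunction of $(-\Delta+m^2)^s$ with eigenvalue $\lambda_\infty$, or $\T(w)=0$. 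Since $\lambda_\infty\notin\sigma((-\Delta+m^2)^s)$, the first alternative is excluded, hence $\T(w)=0$; testing the identity with $\varphi=w$ yields $\|w\|_{\X}^2=0$, so $w\equiv 0$.

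\medskip

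\noindent \emph{Step 2 (contradiction).} Test $\J'(v_j)$ against $v_j$ itself and divide by $\|v_j\|_{\X}^2$:
\[
1 - \lambda_\infty\kappa_s|\T(w_j)|_{L^2(0,T)^N}^2 - \kappa_s\int_{\partial^0\mathcal{S}_T}\frac{f(x,\T(v_j))}{\|v_j\|_{\X}}\,\T(w_j)\,dx = o(1).
\]
Since $\T(w_j)\to 0$ in $L^2(0,T)^N$ and $f(\cdot,\T(v_j))/\|v_j\|_{\X}\to 0$ in $L^2$, the second and third terms vanish, forcing $1=0$, a contradiction. Thus $\{v_j\}$ is bounded in $\X$.

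\medskip

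\noindent \emph{Step 3 (strong convergence).} Extract $v_j\rightharpoonup v$ in $\X$ and $\T(v_j)\to \T(v)$ in $L^2(0,T)^N$. Evaluate $\langle \J'(v_j),v_j-v\rangle\to 0$. By the $L^2$-convergence of traces, the terms
\[
\int_{\partial^0\mathcal{S}_T}\T(v_j)(\T(v_j)-\T(v))\,dx\quad\text{and}\quad \int_{\partial^0\mathcal{S}_T}f(x,\T(v_j))(\T(v_j)-\T(v))\,dx
\]
both tend to zero (the latter using the $L^2$-bound of $f(\cdot,\T(v_j))$ coming from \eqref{infymin}). Hence $\langle v_j,v_j-v\rangle_{\X}\to 0$; since also $\langle v,v_j-v\rangle_{\X}\to 0$ by weak convergence, we conclude $\|v_j-v\|_{\X}\to 0$. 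The main obstacle is Step~1: without the Ambrosetti--Rabinowitz condition, boundedness must be squeezed from the non-resonance assumption, and the delicate point is the uniform $L^2$-smallness of $f(\cdot,\T(v_j))/\|v_j\|_{\X}$, which is what makes the limit problem purely linear and forces the eigenvalue dichotomy.
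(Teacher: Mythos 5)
Your proof is correct, and the overall scheme (normalize $w_j := v_j/\|v_j\|_{\X}$, pass to the limit in the scaled Euler--Lagrange equation via compactness of the trace, invoke nonresonance) is the same as the paper's; the difference lies only in the mechanism of the final contradiction. The paper first upgrades $w_j \rightharpoonup w$ to \emph{strong} convergence in $\X$ by testing the scaled equation against $\varphi = w_j - w$ (so $\|w\|_{\X} = 1$, hence $w \neq 0$), and then observes that $\T(w)$ is a nontrivial eigenfunction with eigenvalue $\lambda_\infty$, which is impossible. You go the other way: from nonresonance you conclude $\T(w) = 0$ and then $w = 0$, so $\T(w_j) \to 0$ in $L^2(0,T)^N$, and testing $\J'(v_j)$ against $v_j$ (divided by $\|v_j\|_{\X}^2$) kills the $\lambda_\infty$-term and the $f$-term and leaves $1 = o(1)$. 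Both are standard for asymptotically linear problems; your variant avoids proving strong convergence of $w_j$, at the price of one extra test against $v_j$, while the paper's strong-convergence step is also exactly what is reused in Step~3 for the bounded sequence, so it packages the two uses of the same estimate more economically. One small remark: the ``splitting over $\{|\T(v_j)| \le R\}$ and $\{|\T(v_j)| > R\}$'' you invoke is not needed; the linear-growth bound \eqref{infymin} directly gives $\bigl| f(\cdot,\T(v_j))/\|v_j\|_{\X}\bigr|_{L^2} \le \varepsilon\,|\T(w_j)|_{L^2} + a_\varepsilon T^{N/2}/\|v_j\|_{\X}$, and the boundedness of $|\T(w_j)|_{L^2}$ plus $\|v_j\|_{\X} \to \infty$ already forces the limit to be $0$ after sending $\varepsilon \to 0$.
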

\begin{proof}
Let  $c\in\R$ and $\{v_{j}\}_{j\in \NN}$ be a sequence in $\mathbb{X}^{s}_{T}$ such that
\begin{equation}\label{psj}
\J(v_{j})\rightarrow c \mbox{ and } \|\J'(v_{j})\|_{\mathbb{X}^{-s}_{T}}\rightarrow 0 \mbox{ as } j\rightarrow \infty,
\end{equation}
where $\mathbb{X}^{-s}_{T}$ is the dual space of $\X$.
Hence, it follows that
\begin{align}\begin{split}\label{ro3min}
&\langle v_j,\varphi\rangle_{\mathbb{X}^{s}_{T}} - \lambda_\infty \kappa_{s}\int_{\partial^{0}\mathcal{S}_{T}}  \T(v_j) \T(\varphi) dx \\
&\,\,\,\,\,\,\,\,\,\,\,\,\,\,\,\,\,\,\,\,\,\,\,- \kappa_{s}\int_{\partial^{0}\mathcal{S}_{T}} f(x, \T(v_j)) \T(\varphi) dx = o(1)
\end{split}\end{align}
for all $\varphi\in \mathbb{X}^{s}_{T}$, where $o(1)$ denotes an infinitesimal sequence.\par
\indent First of all, as usual when using variational methods, we prove the boundedness
of a Palais-Smale sequence for $\J$. Hence, we prove that $\{v_j\}_{j\in \NN}$ is bounded in $\X$.
Let us assume by contradiction that
\begin{equation}\label{roxmin}
\|v_{j}\|_{\mathbb{X}^{s}_{T}}\rightarrow +\infty \quad \hbox{ as } j\rightarrow + \infty.
\end{equation}
\indent By setting $w_j:=\displaystyle\frac {v_j}{\|v_j\|_{\mathbb{X}^{s}_{T}}}$, it is clear that $\{w_j\}_{j\in \NN}$ is bounded in $\mathbb{X}^{s}_{T}$ and by using Theorem \ref{tracethm}, there exists $w\in \mathbb{X}^{s}_{T}$ such that, up to subsequences, it results
\begin{align}\begin{split}\label{ro5min}
&w_j\rightharpoonup w \quad \hbox{ weakly in }  \X  \\
&\T(w_j)\rightarrow \T(w) \quad \hbox{ strongly in } L^{2}(0,T)^{N}.
\end{split}\end{align}
\indent Now, by using (\ref{differo}) with $\varphi:=w_j-w$ and dividing by $\|v_j\|_{\X}$, we have
\begin{align}\begin{split}\label{ro8min}
&\<w_j,w_j-w\>_{\X}=\lambda_\infty \kappa_{s}\int_{\partial^{0}\mathcal{S}_{T}} \T(w_j) (\T(w_j)-\T(w))\, dx\\
&\,\,\,\,\,\,\,\,\,\,\,\,\,\,\,\,\,\,\,\,\,\,\,\,\,\,\,-\kappa_{s}\int_{\partial^{0}\mathcal{S}_{T}} \frac{f(x,\T(v_{j}))}{\|v_j\|_{\X}}(\T(w_j)-\T(w)) \, dx+o(1).
\end{split}\end{align}

\indent Moreover, by \eqref{ro5min} we easily have
\begin{equation}\label{381}
\left| \int_{\partial^{0}\mathcal{S}_{T}} \T(w_j) (\T(w_j)-\T(w)) dx\right|\leq
|\T(w_j)|_2|\T(w_j)-\T(w)|_2=o(1).
\end{equation}
\indent Now, relations \eqref{infymin}, \eqref{roxmin}, and \eqref{ro5min} yield

\begin{eqnarray}\label{382}
 \left| \int_{\partial^{0}\mathcal{S}_{T}} \frac{f(x,\T(v_j))}{\|v_j\|_{\X}}(\T(w_j)-\T(w)) dx \right| &\leq& \varepsilon \frac{|\T(w_j)|_2|\T(w_j)-\T(w)|_2}{\|v_j\|_{\X}}\nonumber\\
               &&+ \frac{a_\varepsilon |\T(w_j)-\T(w)|_1}{\|v_j\|_{\X}}\nonumber \\
&=& o(1)
\end{eqnarray}

\noindent Putting together \eqref{ro8min}, (\ref{381}) and (\ref{382}) we get
$$
\<w_j, w_j-w\>_{\X} =o(1).
$$
\indent Hence, in particular, we also have
\begin{equation}\label{ro11min}
w_j\rightarrow  w \quad \hbox{ strongly in }  {\X},
\end{equation}
and, by the definition of $w_j$, $w\not=0$.

Now, dividing \eqref{ro3min} by $\|v_j\|_{\X}$ we have
\begin{align}\begin{split}\label{ro8xxmin}
&\<w_j,\varphi\>_{\X}
= \lambda_\infty \kappa_{s}\int_{\partial^{0}\mathcal{S}_{T}}  \T(w_j)\,\T(\varphi) dx \,\\
&\,\,\,\,\,\,\,\,\,\,\,\,\,\,\,\,\,\,\,\,\,\,\,\,\,\,\,\,\,\,\,\,\,\,\,\,\,\,\,\,\,\,\,\,\,\,\,-
\kappa_{s}\int_{\partial^{0}\mathcal{S}_{T}} \frac{f(x,\T(v_j))}{\|v_j\|_{\X}}\T(\varphi)dx + o(1).
\end{split}\end{align}

Let us observe that \eqref{infymin}, \eqref{roxmin} and \eqref{ro5min} give
\begin{equation}\label{roxxmin}
\lim_{j\rightarrow + \infty} \int_{\partial^{0}\mathcal{S}_{T}} \frac{f(x,\T(v_j))}{\|v_j\|_{\X}}\T(\varphi)dx =0,\quad \forall\, \varphi\in {\X}.
\end{equation}
\indent
Then, by using \eqref{ro11min} and \eqref{roxxmin}, and passing to the limit in \eqref{ro8xxmin} as $j\rightarrow +\infty$, we get
$$
\<w,\varphi\>_{\X}
= \lambda_\infty \kappa_{s}\int_{\partial^{0}\mathcal{S}_{T}}  \T(w)\,\T(\varphi) dx, \quad \forall\, \varphi\in \X,
$$
that is $u:=\T(w)$ is a weak solution of the following non-local problem
 \begin{equation*}\label{Pauto}
\left\{
\begin{array}{ll}
(-\Delta+m^{2})^{s}u=\lambda u &\mbox{ in } (0,T)^{N}   \\
u(x+Te_{i})=u(x)    &\,\forall x \in \R^{N},\, i\in \ZZ[1,N]
\end{array}
\right.
\end{equation*}
 \noindent which gives a contradiction because of $\lambda_\infty\notin\sigma((-\Delta + m^{2})^{s})$. Thus we have proved that $\{\|v_{j}\|_{\X}\}_{j\in \NN}$ is bounded.\par

Now it remains to check the validity of the Palais-Smale condition, that
is we have to show that every Palais-Smale sequence for $\J$
strongly converges in $\X$, up to a subsequence.\par

%\begin{equation}\label{roboun}
%\{\|v_{j}\|_{\X}\}_j \quad\hbox{ is bounded}.
%\end{equation}
Then, up to subsequence, there exists $v\in \X$ such that
\begin{align}\begin{split}\label{11}
&v_j\rightharpoonup v \quad \hbox{ weakly in }  \X \\
&\T(v_j)\rightarrow \T(v) \quad \hbox{ strongly in }  L^2(0, T)^{N}.
\end{split}\end{align}
By \eqref{psj} and \eqref{11} one has
\begin{equation}\label{B}
\langle \J'(v_j), v_j- v\rangle \rightarrow  0, \quad \hbox{ as } j\rightarrow +\infty
\end{equation}
and, by \eqref{infymin}, we also have
\begin{equation}
\int_{\partial^{0}\mathcal{S}_{T}}|f(x,\T(v_j))||\T(v_j)-\T(v)|\, dx\rightarrow 0,  \quad \hbox{ as } j\rightarrow +\infty.
\end{equation}
\indent Hence, by \eqref{differo} and \eqref{B}, it follows that
\begin{equation}\label{F}
\|v_{j}\|_{\X}^{2} - \langle v_{j}, v \rangle_{\X} \rightarrow 0
\end{equation}
as $j\rightarrow +\infty$.
Then, by \eqref{11} and \eqref{F}, one has
$$
v_j\rightarrow v \quad \hbox{ strongly in }  \X.
$$
The proof is now complete.
\end{proof}

\indent
Taking into account that the $(\rm PS)$ compactness condition is satisfied, we can prove our first existence result.
\begin{proof}
[Proof of Theorem \ref{thm1}.]
We aim to apply the Saddle Point Theorem due to Rabinowitz \cite[Theorem 4.6]{r}.
Putting together (\ref{C}), (\ref{propF}), and trace inequality, we have
\begin{align}
\J(v)\geq \frac 12\left(1-\frac{\lambda_\infty}{\lambda_{h+1}} - \frac{c_1}{\lambda_{h+1}}\right)\|v\|^2_{\X}-\kappa_{s} c_1 T^{N},
\end{align}
for every $v\in\mathbb{V}^{\perp}_{h}$.\par
\indent Then we get
\begin{equation}\label{dagiu}
\J(v)\geq c_2, \quad\forall\, v\in\mathbb{V}^{\perp}_{h},
\end{equation}
provided that $h$ large enough.\par
\indent Now, by \eqref{infymin}, we can see that fixed $\varepsilon>0$, we can find $C_\varepsilon>0$ such that
\begin{align}\begin{split}\label{E}
&\J(v)\leq \frac 12\|v\|_{\X}^2 -\kappa_{s}\frac {\lambda_\infty}{2} |\T(v)|_2^2\\
&\,\,\,\,\,\,\,\,\,\,\,\,\,\,\,\,\,\,\,+ \kappa_{s}\frac \varepsilon{2}|\T(v)|^2_2 + \kappa_{s}C_\varepsilon|\T(v)|_2,
\end{split}\end{align}
for every $v\in \X$.\par
\indent Let $\lambda_h<\lambda_\infty$ and take $\varepsilon>0$ such that $\lambda_h+\varepsilon<\lambda_\infty$.\\
Putting together (\ref{E}) and (\ref{eqnorm}), we obtain for any $v \in \mathbb{V}_{h}$
\begin{align}\begin{split}\label{noi}
&\J(v)\leq \frac{\kappa_{s}}{2}\left(\lambda_h+\varepsilon - \lambda_\infty \right)|\T(v)|_{2}^2\\
&\,\,\,\,\,\,\,\,\,\,\,\,\,\,\,\,\,\,\,+\kappa_{s} C_{\varepsilon} |\T(v)|_{2}.
\end{split}\end{align}
\indent By using (\ref{eqnorm}), we can see that for $v\in \mathbb{V}_{h}$, $ |\T(v)|_{2}\rightarrow +\infty$
when $\|v\|_{\X} \rightarrow +\infty$, so (\ref{noi}) implies that $\J(v)\rightarrow -\infty$ as $\|v\|_{\X} \rightarrow +\infty$ and $v\in \mathbb{V}_{h}$.\par
\indent Therefore we can find a positive constant $c_5$ such that
\begin{equation}\label{dasu}
\J(v)\leq -c_3, \quad\forall\, v\in\mathbb{V}_{h}.
\end{equation}
Hence, the thesis follows by Proposition \ref{palmsmin}, (\ref{dagiu}) and (\ref{dasu}).\par
Finally, if $\lambda_\infty< \lambda_1$, we can prove the existence of a weak solution for our problem by using direct minimization techniques.
\end{proof}

\section{proof of Theorem \ref{thm2}}

This section is devoted to establish a multiplicity result for (\ref{P}). Under our conditions on the nonlinear term $f$, one can obtain not only existence critical point theorems, but also
sharper multiplicity results when the functionals are symmetric. In \cite{[BCS2],[BCS], bmb1,bmb2}, multiplicity
results for critical points of even functionals are stated, and their proofs are based on
the use of a pseudo-index theory; see, for instance, \cite{bbf,b} as general references on this topics.\par

Following this approach, we begin proving the following lemmas proving the necessary geometric features of the energy functional $\J$.
\begin{lem}\label{edi}
Assume that conditions $(f_1)$ and $(f_2)$ hold.
Let $\lambda_h$ be as in $(C_{\lambda_{\infty}})$ and $\mathbb{V}^{\perp}_{h-1}$ as defined in \eqref{defvhort}. Then, there exist two positive constants $\rho$ and $c_0$ such that, setting
$$S_\rho:=\{v\in \X: \|v\|_{\X}= \rho\},$$ the functional $\J$ in \eqref{1bis} verifies
\begin{equation}\label{merco}
\J(v)\geq c_0, \quad \forall\, v\in S_\rho\cap \mathbb{V}_{h-1}^{\perp}.
\end{equation}
\end{lem}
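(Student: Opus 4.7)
The plan is to bound $\J$ from below on the orthogonal complement $\mathbb{V}_{h-1}^{\perp}$ by combining a quadratic estimate for $F(x,t)$ near the origin (which exploits the limit $\lambda_0$ in $(f_2)$) with the spectral inequality analogous to \eqref{C} that holds on $\mathbb{V}_{h-1}^{\perp}$, and then absorbing the remainder into a higher order term via the trace embedding.

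First, I would derive a pointwise estimate for $F$. By $(f_2)$, for every $\varepsilon>0$ there exists $\delta=\delta(\varepsilon)>0$ such that $|f(x,t)|\leq (\lambda_0+\varepsilon)|t|$ for a.e.~$x$ and all $|t|\leq\delta$. Combining with $(f_1)$ and the subcritical growth \eqref{infymin}, one obtains that for any fixed $q\in (2,2^{\sharp}_{s}]$ there is $C_\varepsilon>0$ with
\begin{equation*}
|F(x,t)|\leq \frac{\lambda_0+\varepsilon}{2}\,t^{2}+C_\varepsilon |t|^{q}, \qquad \text{for a.e.~} x\in (0,T)^{N},\ \forall\,t\in\R.
\end{equation*}
Plugging this into \eqref{1bis} yields, for every $v\in\X$,
\begin{equation*}
\J(v)\geq \frac{1}{2}\|v\|^{2}_{\X}-\frac{\kappa_{s}(\lambda_\infty+\lambda_0+\varepsilon)}{2}|\T(v)|_{2}^{2}-\kappa_{s}C_\varepsilon |\T(v)|_{q}^{q}.
\end{equation*}

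Next, I would apply to $v\in\mathbb{V}_{h-1}^{\perp}$ the spectral inequality that is the index-shifted version of \eqref{C}, namely
\begin{equation*}
\kappa_{s}\lambda_{h}\,|\T(v)|_{2}^{2}\leq \|v\|_{\X}^{2},
\end{equation*}
which follows from Rayleigh's principle $(v)$ of Lemma \ref{lemmino} applied to $\T(v)\in \mathbb{P}_h$, together with the trace inequality \eqref{tracein}. Moreover, by Theorem \ref{compacttracethm} there is $C_q>0$ such that $|\T(v)|_{q}\leq C_q \|v\|_{\X}$ for every $v\in\X$. Inserting these two estimates gives
\begin{equation*}
\J(v)\geq \frac{1}{2}\left(1-\frac{\lambda_\infty+\lambda_0+\varepsilon}{\lambda_h}\right)\|v\|_{\X}^{2}-\kappa_{s}C_\varepsilon C_q^{q}\,\|v\|_{\X}^{q}.
\end{equation*}

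Finally, condition $(C_{\lambda_\infty})$ gives $\lambda_0+\lambda_\infty<\lambda_h$, so $\varepsilon>0$ may be chosen so small that $\lambda_0+\lambda_\infty+\varepsilon<\lambda_h$, making the coefficient of $\|v\|_{\X}^{2}$ a strictly positive constant, say $\alpha>0$. Since $q>2$, the inequality $\J(v)\geq \alpha\rho^{2}-\kappa_{s}C_\varepsilon C_q^{q}\rho^{q}$ on $S_\rho\cap\mathbb{V}_{h-1}^{\perp}$ is strictly positive for all sufficiently small $\rho>0$, which yields \eqref{merco} with an appropriate $c_0>0$. The main technical point is the two-sided control of $F$, which requires using the limit behavior at $0$ from $(f_2)$ together with the subcritical bound at infinity to produce a super-quadratic correction; everything else then reduces to the spectral gap $\lambda_0+\lambda_\infty<\lambda_h$ and the standard trace embedding.
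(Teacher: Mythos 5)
Your argument mirrors the paper's almost line by line: a near-origin quadratic estimate for $F$ coming from $(f_2)$, a super-quadratic correction using the subcritical growth of $f$, the index-shifted spectral inequality \eqref{C} on $\mathbb{V}_{h-1}^{\perp}$, the trace embedding from Theorem \ref{compacttracethm}, and finally the choice of $\varepsilon$ and $\rho$ small using the strict gap $\lambda_0+\lambda_\infty<\lambda_h$ in $(C_{\lambda_\infty})$. One small slip you should correct: since $\lambda_0<0$ (forced by $(C_{\lambda_\infty})$) and $\varepsilon$ must be taken smaller than $|\lambda_0|$, the quantity $\lambda_0+\varepsilon$ is negative, so the pointwise bounds must be one-sided, namely $f(x,t)\,t\le(\lambda_0+\varepsilon)t^2$ for $|t|\le\delta$ and hence $F(x,t)\le\tfrac{\lambda_0+\varepsilon}{2}t^2+C_\varepsilon|t|^q$, rather than $|f(x,t)|\le(\lambda_0+\varepsilon)|t|$ or $|F(x,t)|\le\cdots$, whose right-hand sides would be negative near $t=0$; the remainder of your chain of inequalities only uses the one-sided form, so with this rewording the proof is sound.
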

\begin{proof}
By using $(f_2)$ we know that
$$
\lim_{|t|\rightarrow +\infty}\frac{F(x,t)}{t^{2}} \ = 0
$$
and
$$
\lim_{t\rightarrow 0}\frac{F(x,t)}{t^{2}} \ = \frac{\lambda_0}2,
$$
uniformly with respect to almost every $x \in (0, T)^{N}$. Moreover, $\lambda_0<0$ in view of condition $(C_{\lambda_{\infty}})$.\par
Then,
for every $\varepsilon>0$ there exist $r_\varepsilon(\geq 1)$ and $\delta_\varepsilon>0$
such that
\begin{equation}
\label{pr1}
|F(x,t)|\leq \frac\varepsilon 2t^2, \quad \quad \hbox{if $|t|> r_\varepsilon$ }
\end{equation}
and
\begin{equation}
\label{pr2}
\left|F(x,t)-\frac {\lambda_0}2t^2\right|\leq \frac\varepsilon 2t^2, \quad \quad \hbox{if $|t|< \delta_\varepsilon$}
\end{equation}\par
\noindent and for almost every $x\in (0, T)^{N}$.\par
On the other hand, by $(f_1)$, taking any constant $$q\in \left[0, \displaystyle\frac{4s}{N-2s}\right),$$  there exists $k_{r_\varepsilon}>0$ such that
\begin{equation}\label{pr3}
|F(x,t)|
\leq k_{r_\varepsilon}{|t|}^{q+2}, \quad \hbox{if $\delta_\varepsilon\leq |t|\leq r_\varepsilon$},
\end{equation}
and for almost every $x\in (0, T)^{N}$.\par
Putting together \eqref{pr1}--\eqref{pr3}, we can deduce that for any $\varepsilon>0$ there exists $k_\varepsilon>0$ such
that
$$%-\frac{(\varepsilon - \lambda_0)}2|t^2 - {a_\varepsilon}|t|^{q+2}\leq
F(x,t)
\leq\frac{\lambda_0 + \varepsilon}2t^2 + {k_\varepsilon}|t|^{q+2},
$$
for almost every $x\in (0, T)^{N}$ and for all $t\in\R$.\par
As a consequence
\begin{align*}
\int_{\partial^{0}\mathcal{S}_{T}} F(x,\T(v)) \; {\rm d}x\leq \frac{\lambda_0 + \varepsilon}2|\T(v)|^2_2 +
{k_\varepsilon}|\T(v)|^{q+2}_{q+2},
\end{align*}
for every $v\in \X$.\par
From this and by using Theorem \ref{compacttracethm}, for a suitable $k'_\varepsilon>0$ we can see that
\begin{equation}\label{nicola}
\J(v)\geq \frac{1}{2}\|v\|_{\X}^2 - \frac{\kappa_{s}(\lambda_{\infty} + \lambda_{0} + \varepsilon)}{2}|\T(v)|_2^2 -
k_\varepsilon' \|v\|_{\X}^{q+2},
\end{equation}
for every $v\in \X$.\par
\indent Then, by \eqref{C} and \eqref{nicola}, it follows that
\begin{align*}
\J(v)\geq \frac{1}{2} \left(1 - \frac{\lambda_\infty + \lambda_0 + \varepsilon}{\lambda_{h}}\right)\|v\|_{\X}^2 - k_\varepsilon' \|v\|_{\X}^{q+2},
\end{align*}
for every $v\in \mathbb{V}_{h-1}^{\perp}$.\par
 Hence, by $(C_{\lambda_{\infty}})$, for a suitable $\varepsilon$, there exists $k''_\varepsilon>0$ such that
\begin{align*}
\J(v)\geq  k''_\varepsilon\|v\|_{\X}^2- k_\varepsilon'\|v\|_{\X}^{q+2},
\end{align*}
for every $v\in \mathbb{V}_{h-1}^{\perp}$.\par
\indent Thus we can find $\rho$ sufficiently small and $c_0>0$ such that inequality
\eqref{merco} holds.
\end{proof}

\begin{lem}\label{simp}
Assume that $(f_1)$ and \eqref{conmu2} hold.
Let $\lambda_k$ as in $(C_{\lambda_{\infty}})$, $\mathbb{V}_{k}$ as in \eqref{defvh} and $c_0$ as in Lemma \ref{edi}. Then, there exists   $c_\infty>c_0$ such that the functional $\J$ in \eqref{1bis} verifies
\begin{equation}\label{abil}
\J(v)\leq c_\infty, \quad \forall\, v\in \mathbb{V}_k.
\end{equation}
\end{lem}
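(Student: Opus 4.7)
The plan is to exploit the fact that on the finite-dimensional space $\mathbb{V}_k$ the norm $\|\cdot\|_{\X}$ is controlled from above by $|\T(\cdot)|_{L^{2}(0,T)^{N}}$ (with constant $\kappa_{s}\lambda_k$, by \eqref{eqnorm}), so that the quadratic part $\frac{1}{2}\|v\|_{\X}^{2}-\frac{\lambda_{\infty}\kappa_{s}}{2}|\T(v)|_{2}^{2}$ becomes a negative definite quadratic form thanks to the strict inequality $\lambda_k<\lambda_\infty$ provided by $(C_{\lambda_{\infty}})$. The sublinear term coming from $F$ will then be absorbed by the negativity of the leading term.

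First, for $v\in \mathbb{V}_k$, I would apply the right-hand inequality of \eqref{eqnorm} to estimate
\begin{equation*}
\tfrac{1}{2}\|v\|_{\X}^{2}-\tfrac{\lambda_{\infty}\kappa_{s}}{2}|\T(v)|_{2}^{2}\;\leq\;\tfrac{\kappa_{s}}{2}(\lambda_{k}-\lambda_{\infty})\,|\T(v)|_{2}^{2},
\end{equation*}
and observe that by $(C_{\lambda_{\infty}})$ the coefficient $\lambda_{k}-\lambda_{\infty}$ is strictly negative. Next, the bound \eqref{infymin} yields, after integration, $|F(x,t)|\leq \tfrac{\varepsilon}{2}t^{2}+a_{\varepsilon}|t|$ for every $\varepsilon>0$, and hence
\begin{equation*}
\Bigl|\kappa_{s}\!\!\int_{\partial^{0}\mathcal{S}_{T}}\!\!F(x,\T(v))\,dx\Bigr|\leq \tfrac{\kappa_{s}\varepsilon}{2}|\T(v)|_{2}^{2}+\kappa_{s}a_{\varepsilon}|\T(v)|_{1}.
\end{equation*}
Combining the two displays, I obtain on $\mathbb{V}_k$
\begin{equation*}
\J(v)\leq \tfrac{\kappa_{s}}{2}\bigl(\lambda_{k}-\lambda_{\infty}+\varepsilon\bigr)|\T(v)|_{2}^{2}+\kappa_{s}a_{\varepsilon}|\T(v)|_{1}.
\end{equation*}

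Fixing $\varepsilon>0$ so small that $\lambda_{k}-\lambda_{\infty}+\varepsilon<0$, and using the equivalence of all norms on the finite-dimensional space $\T(\mathbb{V}_k)$ to bound $|\T(v)|_{1}\leq C|\T(v)|_{2}$, the right-hand side becomes a quadratic polynomial in $|\T(v)|_{2}$ with negative leading coefficient. Consequently, $\J$ is uniformly bounded from above on the whole of $\mathbb{V}_k$, say by some constant $M$. It then suffices to choose $c_{\infty}:=\max\{M,c_{0}+1\}$ to ensure both \eqref{abil} and the strict inequality $c_{\infty}>c_{0}$.

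I do not expect a serious obstacle in this argument: the only delicate point is the interplay between the two inequalities in \eqref{eqnorm} (the upper bound is the useful one here, in contrast to the proof of Lemma \ref{edi} where the lower bound on $\mathbb{V}_{h-1}^{\perp}$ was needed), together with the fact that \eqref{conmu2} allows to make the linear part absorb everything except a subquadratic remainder. The finite dimensionality of $\mathbb{V}_k$ is essential to pass from the $L^{2}$-estimate to an honest upper bound on $\J$.
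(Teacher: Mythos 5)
Your proof is correct and takes essentially the same route as the paper's: both apply the growth bound coming from \eqref{infymin} to $F$, then the upper estimate $\|v\|_{\X}^{2}\leq\kappa_{s}\lambda_{k}|\T(v)|_{2}^{2}$ from \eqref{eqnorm} on $\mathbb{V}_{k}$, fix $\varepsilon$ so that $\lambda_{k}+\varepsilon<\lambda_{\infty}$, and conclude that $\J$ is bounded above by a concave quadratic in $|\T(v)|_{2}$. The only cosmetic difference is that the paper keeps $|\T(v)|_{2}$ (its estimate \eqref{E}) where you have $|\T(v)|_{1}$; you then invoke finite-dimensionality to pass to $|\T(v)|_{2}$, though in fact Cauchy--Schwarz on the bounded domain $(0,T)^{N}$ gives $|\T(v)|_{1}\leq T^{N/2}|\T(v)|_{2}$ without appealing to finite dimension, so that crutch is not actually needed.
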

\dimo
By \eqref{E}, taking $\lambda_k$  as in $(C_{\lambda_{\infty}})$ and $\varepsilon>0$ such that $\lambda_k+\varepsilon<\lambda_\infty$, it results that
$$
\J(v)\leq \frac{\kappa_{s}}{2}\left(\lambda_k+\varepsilon - \lambda_\infty \right)|\T(v)|_{2}^2  +\kappa_{s} C_{\varepsilon} |\T(v)|_{2},
$$
for every $v\in \mathbb{V}_k$.\par
Then, in view of (\ref{eqnorm}), there exists $c_\infty=c_\infty(\varepsilon)$ (with $c_\infty>c_0$), such that inequality \eqref{abil} holds.
\hfill$\square$\par
\smallskip
We conclude this section giving the proof of Theorem \ref{thm2}.
\begin{proof}[Proof of Theorem \ref{thm2}]

The idea of the proof consists in applying \cite[Theorem 2.9]{bbf} (recalled in Subsection \ref{INDI}) to the functional $\J$
defined on the Hilbert space $\X$.

By Proposition \ref{palmsmin}, follows that $\J$ is an even functional satisfying $(\rm PS)$ in $\R$.
Let us consider
$\lambda_h$, $\mathbb{V}^{\perp}_{h-1}$, $\rho, c_0$ as in Lemma \ref{edi} and
$\lambda_k, \mathbb{V}_{k}$,  $c_\infty$ as in Lemma \ref{simp}.\par
 Then we consider the pseudo--index theory
$(S_\rho\cap \mathbb{V}_{h-1}^{\perp}, {\mathcal H}^\ast, \gamma^\ast)$ related to the genus,
$S_\rho\cap \mathbb{V}_{h-1}^{\perp}$  and $\J$.\par
In view of Remark \ref{baba},
taking $V:=\mathbb{V}_k$,  $\partial B:= S_\rho$ and $W:=\mathbb{V}_{h-1}^{\perp}$, we get
$$
\gamma\left(\mathbb{V}_k\cap h\left(S_\rho\cap \mathbb{V}^{\perp}_{h-1}\right)\right)\geq \dim \mathbb{V}_k - \mbox{ codim }\mathbb{V}^{\perp}_{h-1}, \quad \forall\, h\in {\mathcal H}^\ast,
$$
which implies
\[
\gamma^\ast(\mathbb{V}_k)\geq k-h+1.
\]
\indent Then we can apply Theorem \ref{group} with $\tilde A:= \mathbb{V}_k$ and $S:=S_\rho\cap \mathbb{V}_{h-1}^{\perp}$ to deduce that $\J$ has at least $k-h+1$ distinct pairs of critical
points corresponding to at most $k-h+1$ distinct
critical values $c_i$, where $c_i$ is as in \eqref{value}.\par
Then, if $Z$ denotes the set of $k-h+1$ distinct pairs of critical
points of $\J$ obtained applying Theorem \ref{group}, the set $\T(Z)$ contains $k-h+1$ distinct pairs of weak solutions of problem \eqref{P}. The proof is complete.
\end{proof}

\begin{remark}\rm{
We point out that, combining the proof of our main results and those of \cite[Theorem 3.1]{[BCS]}, Theorem \ref{thm2} holds if we require that
\[
\lambda_\infty<\ \lambda_h\leq \ \lambda_k\ < \lambda_0 + \lambda_\infty,
\]
instead of condition $(C_{\lambda_{\infty}})$.\par
\noindent As a matter
of fact, we plan to consider further applications of our abstract framework for fractional
equations involving a suitable resonant term in a forthcoming paper.}
\end{remark}

\noindent {\bf Acknowledgements.} The paper has been carried out under the auspices of the INdAM - GNAMPA Project 2016 titled: {\it Problemi variazionali su variet\`a Riemanniane e gruppi di Carnot}. This manuscript was revised during the visit of G.M.B. to the Mathematics Section of the Abdus Salam
International Centre for Theoretical Physics (ICTP) - Trieste, in August 2016. He would like to thank Prof. F.
Rodr\'{i}guez-Villegas for his kind invitation and warm hospitality during the visit at the ICTP. He also express his gratitude to Prof. S. Ouaro and the Research Group from Burkina Faso for the joint scientific activities in Trieste. A special thank goes to Ms. K. Mabilo and Prof. F. Maggi for their strong human support and generosity.

\end{document}